\newcommand{\AvgDeg}{\mathrm{AvgDegree}}
\newcommand{\VExp}{\Psi}
\newcommand{\BikeConst}{c_{\scaleto{\ref{lem:bike-freeness-random-graph}}{5pt}}}
\newcommand{\BikeConstexp}{c_{\scaleto{\ref{lem:bike-freeness-random-graph}}{3pt}}}
\newcommand{\HYConst}{C_{\mathrm{HY}}}
\newcommand{\GWConst}{c_{\scaleto{\ref{lem:gauswave}}{5pt}}}
\newcommand{\Cmain}{C_{ \scaleto{\ref{thm:main}}{5pt}}}
\begin{document}
\title{Many Nodal Domains in Random Regular Graphs}
\author{Shirshendu Ganguly\thanks{\texttt{sganguly@berkeley.edu}. Supported by NSF Grant DMS-1855688, NSF Career Grant DMS-1945172, and a Sloan Fellowship. } \\ UC Berkeley \and Theo McKenzie\thanks{\texttt{mckenzie@math.berkeley.edu}. Supported by NSF GRFP Grant DGE-1752814. } \\ UC Berkeley\and Sidhanth Mohanty\thanks{\texttt{sidhanthm@cs.berkeley.edu}. Supported by Google PhD Fellowship.}\\ UC Berkeley \and Nikhil Srivastava\thanks{\texttt{nikhil@math.berkeley.edu}. Supported by NSF Grant CCF-2009011.} \\ UC Berkeley}

\maketitle

\begin{abstract}
    Let $G$ be a random $d$-regular graph.  We prove that for every constant $\alpha > 0$, with high probability every eigenvector of the adjacency matrix of $G$ with eigenvalue less than $-2\sqrt{d-2}-\alpha$ has $\Omega(n/\polylog(n))$ nodal domains.
\end{abstract}

\section{Introduction}
Courant's  nodal domain theorem states that the zero set of the $k$th smallest Dirichlet eigenfunction of the Laplacian on a smooth bounded domain in $\R^d$ partitions it into at most $k$ connected components \cite{courant2008methods}. These components, known as the {\em nodal domains} of the eigenfunction, have garnered significant interest over time in spectral geometry and mathematical physics (see e.g. \cite{zelditch2017eigenfunctions}). The analogous definition for a finite discrete graph $G=(V,E)$ is the following.

\begin{definition}[Nodal domains]
    A (\emph{weak}) \emph{nodal domain} of a function $f:V\rightarrow\R$ on $G$ is a maximal connected subgraph $S$ of $G$ such that $f(u)\geq 0$ for all $u\in S$ or $f(u)\leq 0$ for all $u\in S$.  A \emph{strong nodal domain} of $f:V\rightarrow \R$ on $G$ is a maximal connected subgraph $S$ of $G$ such that $f(u)> 0$ for all $u\in S$ or $f(u)<0$ for all $u\in S$. 

\end{definition}
Fiedler \cite{fiedler1975property} showed that for a tree, the eigenvector of the $k$th smallest eigenvalue of the discrete Laplacian (defined as $L_G=D_G-A_G$ where $D_G$ is the diagonal matrix of vertex degrees and $A_G$ is the adjacency matrix) has exactly $k$ nodal domains. Davies, Gladwell, Leydold, and Stadler \cite{davies2000discrete} showed that for an arbitrary graph that the $k$th Laplacian eigenvector has at most $k$ nodal domains and at most $k+m-1$ strong nodal domains, where $m$ is the multiplicity of the $k$th eigenvalue.  Berkolaiko \cite{berkolaiko2008lower} showed that for a connected graph with $n$ vertices and $n+\ell-1 $ edges (such that removing $\ell$ edges would produce a tree) the $k$th eigenvector of a Schr\"odinger operator with arbitrary potential has between $k-\ell$ and $k$ nodal domains. Beyond these results, we are not aware of any lower bounds on the number of nodal domains of eigenvectors of any large class of graphs.

 Our main result is the following lower bound on the number of nodal domains of a random regular graph\footnote{We restrict our attention to weak nodal domains as there are at least as many strong domains as weak domains.}.  We refer to a nodal domain with a single vertex as a {\em singleton} nodal domain. 
\begin{theorem}\label{thm:main}
   Fix $d\geq 3$ and $\alpha>0$ and let $G$ be a random $d$-regular graph on $n$ vertices. Then with probability $1-o(1)$ as $n\rightarrow\infty$, every eigenvector of $A_{G}$ with eigenvalue $\lambda\leq -2\sqrt{d-2}-\alpha$ has $\Omega\left(\frac{n}{\log^{C_{ \scaleto{\ref{thm:main}}{3pt}}}(n)}\right)$ singleton nodal domains,
   where $\Cmain\le 301$ is an absolute constant.
\end{theorem}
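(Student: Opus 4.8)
\emph{Proof sketch.} The strategy is to produce, for a given eigenvector $f$ with $\lambda\le -2\sqrt{d-2}-\alpha$, a family of $\Omega(n/\polylog n)$ vertices $v$ that are singleton nodal domains, by controlling the sign pattern of $f$ on the ball $B_1(v)$. The mechanism is this: if $B_r(v)$ is tree-like, then $f|_{B_r(v)}$ is determined, through the eigenvector equation together with the data near the outer sphere $\partial B_r(v)$; once we condition on $f$ away from $B_r(v)$ and re-randomize the edges inside (a switching argument), that boundary data looks ``generic'', and $f|_{B_r(v)}$, suitably renormalized, is well approximated by a centered Gaussian field on the $d$-regular tree whose covariance is the spherical function $\phi_{\mathrm{dist}(x,y)}(\lambda)$ --- a ``Gaussian wave''. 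For this field the $d$ neighbours $u_1,\dots,u_d$ of $v$ are all strictly negative with probability at least an absolute constant $\GWConst>0$, and then $f(v)=\lambda^{-1}\sum_i f(u_i)>0$ automatically since $\lambda<0$, so $v$ is a singleton nodal domain.

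Carrying this out: (1) Fix $r=\Theta(\log\log n)$ and use the bicycle-freeness estimate for random regular graphs (exponent $\BikeConst$) to conclude that with probability $1-o(1)$ all but $o(n)$ vertices have $B_r(v)$ isomorphic to the depth-$r$ truncated $d$-regular tree, and that among these one can extract a family $\mathcal V$ of size $\Omega(n/\polylog n)$ whose $2r$-balls are pairwise disjoint. (2) Invoke the delocalization / local-law input (constant $\HYConst$): every eigenvector with eigenvalue in the relevant window has $\|f\|_\infty=O(\polylog(n)/\sqrt n)$, so no single $B_r(v)$ carries more than a $\polylog(n)\cdot|B_r(v)|/n$ fraction of $\|f\|_2^2$; this is what makes the conditioned boundary data behave like a uniform direction on the appropriate sphere, hence approximately Gaussian on bounded-dimensional marginals. (3) Apply the Gaussian-wave lemma to convert, with total-variation error $o(1)$ on the marginal $(f(v),f(u_1),\dots,f(u_d))$, the actual restriction into the tree Gaussian field; the hypothesis $\lambda\le -2\sqrt{d-2}-\alpha$ enters precisely here, as the regime in which the Green's functions of the truncated trees implementing the inward propagation are non-degenerate and the tree approximation is quantitatively stable uniformly in $n$, making the limiting covariance well-conditioned and giving $\Pr[f(u_i)<0\ \text{for all }i]\ge \GWConst$. (4) Since the events $\{v\text{ is a singleton nodal domain}\}$ for $v\in\mathcal V$ depend on disjoint, essentially independently re-randomizable pieces of $G$, a second-moment estimate gives that with probability $1-o(1)$ at least $\NDConst\,|\mathcal V|=\Omega(n/\polylog n)$ of them hold for this $f$. (5) To make the conclusion hold simultaneously for \emph{all} eigenvectors, replace ``the eigenvector $f$'' throughout by spectral projectors onto short spectral windows along a fine net of $[-d,-2\sqrt{d-2}-\alpha]$, so that the argument only refers to genuine eigenvectors at the very end, and use stability of the singleton predicate under perturbations of $f$ that are small in $\ell^2$ to cover eigenvalue multiplicities and the net error.

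The main obstacle is step (3): establishing the Gaussian-wave approximation quantitatively, with an error small enough to survive the union bound over $\mathcal V$ and over the spectral net, and uniformly down to the spectral edge $-2\sqrt{d-1}$, where delocalization is most fragile; in particular one must verify that conditioning on $f$ outside $B_r(v)$ and re-randomizing inside does not spoil the genericity of the boundary data. Tracking the exponents through steps (1)--(3) --- the bicycle-freeness radius, the $\ell^\infty$ slack, and the Gaussian-wave error --- is what produces the polylogarithmic loss with $\Cmain\le 301$.
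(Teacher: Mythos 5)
There is a genuine gap: your argument covers only one of the two cases the theorem requires, and it places the hypothesis $\lambda\le-2\sqrt{d-2}-\alpha$ in the wrong branch. The Gaussian-wave heuristic you describe (made rigorous in the paper via the weak-convergence theorem of Backhausz--Szegedy, \pref{thm:gaussianwave}) only says that the local statistics of $f$ around a uniformly random vertex are close to those of $\sigma\cdot\Lambda_\lambda$ for \emph{some} $\sigma\in[0,1]$. When $\sigma$ is close to $0$ --- i.e.\ when $f$ is $\ell_2$-localized on a small vertex set --- the root of $\sigma\cdot\Lambda_\lambda$ being a singleton domain carries no information about the sign pattern of $f$ at typical vertices, and your step (3) collapses. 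The $\ell_\infty$ bound $\|f\|_\infty\le\polylog(n)/\sqrt n$ does not exclude this: $f$ could carry almost all of its $\ell_2$ mass on $n/\polylog(n)$ vertices, so the claim that the conditioned boundary data is ``generic'' is unjustified. The paper handles this localized case by an entirely separate, deterministic argument (\pref{sec:specrad} and \pref{sec:localized}): if $f$ lives on a small set $S$ and has few singleton domains there, one deletes same-sign edges to pass to a subgraph $H\subseteq G[S]$ of maximum degree $d-1$, high girth, and hereditary average degree close to $2$, with $f_S^{\top}A_Hf_S\approx\lambda\|f_S\|^2$; Kesten's bound $2\sqrt{\Delta-1}$ for forests then forces $\lambda\ge -2\sqrt{d-2}-o(1)$, a contradiction. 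This is where $-2\sqrt{d-2}$ enters --- not, as you assert, from non-degeneracy of tree Green's functions in the Gaussian-wave step, which in the paper works for any $\lambda\le-\alpha<0$ (\pref{lem:gauswave}).

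Separately, the quantitative switching/re-randomization scheme of your steps (3)--(5) --- a total-variation Gaussian approximation of local marginals strong enough to survive a second-moment argument and a union bound over a spectral net --- is precisely what is \emph{not} available at this sparsity; you correctly flag it as ``the main obstacle,'' but no such quantitative local limit theorem for individual eigenvector entries of sparse random regular graphs is known. The paper avoids it entirely: \pref{thm:gaussianwave} is a soft statement holding simultaneously for all eigenvectors with probability $1-\eps$, and because $\nu_{G,f,1}$ is the empirical distribution over a random vertex, a single L\'evy--Prokhorov bound converts the constant-probability event of \pref{lem:gauswave} directly into $\GWConst n/2$ actual singleton domains, with no independence, second moment, or net over the spectrum needed. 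So the delocalized half of your plan can be repaired by replacing the switching machinery with this black-box input, but the localized half is missing and requires a genuinely different idea.
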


 Note that for large enough $n$, almost every $d-$regular graph  has at least $\Omega(d^{-3/2}n)$ eigenvalues with $\lambda\leq -2\sqrt{d-2}$, as the spectrum of $A_{G}$ converges weakly to the Kesten-McKay measure \cite{mckay1981expected}. Since the Laplacian of a $d-$regular graph is  equal to $dI-A_{G}$, the conclusion of the theorem also holds for the ``high energy'' eigenvectors of the Laplacian with eigenvalues $\lambda\ge  d+2\sqrt{d-2}+\alpha$;  we will accordingly also refer to highly negative eigenvalues of the adjacency matrix as high energy. 

The proof of \pref{thm:main} appears in \pref{sec:either} to \pref{sec:many} and employs tools from random matrix theory ($\ell_\infty$ delocalization of eigenvectors of random regular graphs \cite{huang2021spectrum}), graph limits (weak convergence of eigenvectors of random regular graphs \cite{backhausz2019almost}), and combinatorics (expansion and short cycle counts of random regular graphs), and is outlined in \pref{sec:sketch}. The conceptual phenomenon articulated by the proof is that (under certain conditions) high energy eigenvectors of graphs cannot simultaneously have few nodal domains and be delocalized. A simple demonstration of this tension for the easier case of $d=3,4$ is presented in \pref{sec:warmup}.
Due to the use of a weak convergence argument, there is no effective bound on the $o(1)$ probability in the statement of \pref{thm:main}, and the proof requires $d$ to be constant.

We complement \pref{thm:main} by observing in \pref{sec:xml} (\pref{thm:xml}) that by an application of the expander mixing lemma, {\em every} non-leading eigenvector $f$ of a $d$-regular expander graph $G$ with sufficiently large spectral gap has two nodal domains which together contain a constant fraction of the vertices of $G$.  


\subsection{History and Related Work}
\noindent{\em Random Graphs.} 
 Dekel, Lee, and Linial \cite{dekel2011eigenvectors} initiated the study of nodal domains of eigenvectors of Erd\"os-R\'enyi $G(n,p)$ random graphs. They showed that for constant $p$, with high probability all but $O(1)$ of the vertices are contained in two large nodal domains for every non-leading adjacency eigenvector. Arora and Bhaskara \cite{arora2011eigenvectors} improved this by establishing that when $p\geq n^{-1/19+o(1)}$ there are typically exactly $2$ nodal domains in each non-leading eigenvector. H. Huang and Rudelson \cite{huang2020size} proved that these two domains are approximately the same size for eigenvectors of eigenvalues macroscopically away from the edge when $p \in [n^{-c},1/2]$ for some fixed $c$ and also for the first and last $e^{c(\log\log n)^2}$ eigenvectors when $p\in (0,1)$ is constant. Linial suggested studying the shape of these nodal domains; for example, how many vertices are on the boundary of a domain, what is the distribution of distances to the boundary, etc. For sufficiently dense graphs sampled from $G(n,p)$, this geometry turned out to be trivial --- in particular, Rudelson \cite[Section 5.2]{rudelson2017delocalization} showed that with high probability, for $G(n,p)$ with fixed $p\in (n^{-c},1)$, every vertex is adjacent to $\Omega(n/\polylog n)$ vertices that have the opposite sign in each eigenvector $f$. This left open the question of nontrivial structure of the nodal domains for sparse graphs\footnote{As a starting point, Eldan, H. Huang, and Rudelson asked in 2020 \cite{rudelsonsimons} whether the most negative eigenvector of a sparse $G(n,p)$ graph has more than two nodal domains. }.   \pref{thm:main} and \pref{thm:xml} show that both the number and the geometry of nodal domains is nontrivial for high energy eigenvalues of sparse random regular graphs.
 
In contrast to the situation for dense graphs, Dekel, Lee, and Linial observed that in simulations, a randomly selected $d$-regular graph with $d$ constant has a number of nodal domains that increases as the eigenvalue becomes more negative. Our results confirm their observation that the most negative eigenvalues have many nodal domains.\\

\noindent {\em Random Matrix Theory and Graph Limits.} The results for $G(n,p)$ described above rely crucialy on delocalization estimates in random matrix theory. There are two relevant notions of delocalization, namely $\ell_\infty$ norm bounds (the strongest being of order $\log^C n/\sqrt{n}$) and ($\ell_2$) {\em no-gaps delocalization}, which asserts that every subset of $tn$ vertices has at least a $\beta(t)$ fraction of the $\ell_2$ mass of an  eigenvector. Generally speaking, $\ell_\infty$ bounds  are derived via delicate Green's functions estimates \cite{bauerschmidt2019local} whereas no-gaps bounds are derived via geometric arguments \cite{rudelson2016no}. No-gaps delocalization is so far only known for sufficiently dense graphs, and remains open for sparse random graphs.

The proof of \pref{thm:main} relies on both notions of delocalization and combines them in a new way. We first consider no-gaps delocalization at scale $t=1-\delta$ for a small constant $\delta$; if this property holds for an eigenvector,  we employ a weak convergence result of Backhausz and Szegedy \cite{backhausz2019almost} to argue that the local distribution of eigenvector entries around a randomly chosen vertex behaves like a Gaussian wave (defined in Section \ref{sec:gwave}), implying that a random vertex is a singleton nodal domain with constant probability. Otherwise, we apply the $\ell_\infty$ delocalization estimate of \cite{bauerschmidt2019local,huang2021spectrum} to the subset of $\delta n$ vertices on which the eigenvector is $\ell_2$-localized; the $\ell_\infty$ bound allows us to simplify and exploit the locally almost-treelike structure of the graph on this subset and deduce many singleton nodal domains via a different argument which hinges on the negativity of the eigenvalue $\lambda$. Thus, we sidestep the current lack of no-gaps estimates for random regular graphs, as well as the difficulty of examining individual eigenvector entries solely using the Green's function method\footnote{The Green's function $(A-zI)^{-1}$ of a random regular graph can only approximate that of the infinite tree when $\Im(z)\geq \polylog n/n$, meaning that it inherently reflects the aggregate behavior of $\polylog n$ eigenvectors.}. \\

\noindent {\em Mathematical Physics.} The field of quantum chaos aims to relate the classical dynamics of the geodesic flow on a manifold to the behavior of its high energy Laplacian eigenfunctions \cite{rudnick2008quantum}, and the number of nodal domains has also been studied in this context \cite{blum2002nodal}. A guiding question in this area is Berry's random wave conjecture \cite{berry1977regular}, which asserts that the high energy eigenfunctions of quantum chaotic billiards behave like ``Gaussian random waves'' in the limit. Random $d$-regular graphs have studied as a discrete model of quantum chaos \cite{kottos1997quantum,band2007nodal,smilansky2013discrete}; in particular, a discrete analogue of Berry's conjecture considered in \cite{elon2008eigenvectors} asserts that the bulk eigenvectors of random $d$-regular graphs have a (locally) jointly Gaussian distribution with a specific {\em nonzero} covariance matrix depending on the degree $d$. This conjecture implies the existence of many nodal domains in random regular graphs. \pref{thm:main} proves the implication of the conjecture for sufficiently negative $\lambda$, and one branch of its proof (\pref{sec:either}) is directly inspired by the ``Gaussian wave'' heuristic, which we make rigorous via the weak convergence result of \cite{backhausz2019almost}.

 \subsection{Low degree case}\label{sec:warmup}
 As a warm-up, we prove a weaker version of \pref{thm:main} which applies to any eigenvector of a regular graph with sufficiently negative eigenvalue and an $\ell_\infty$ bound.
 
 \begin{prop}\label{prop:warmup}
 For $\alpha,\eta>0$, $d\geq 3$, assume $f$ is an eigenvector of a $d$-regular graph $G=(V,E)$ with eigenvalue $\lambda\leq -(d-1)-\alpha$ and
 \begin{equation}\label{eq:infbound}
     \|f\|_\infty\leq \frac{\eta}{\sqrt n}.
 \end{equation}
Then $f$ has at least
\[\frac{n}{(2\eta)^{2+\frac{\log(d-1)}{\log(1+\alpha/(d-1))}}}\]
nodal domains. 
 \end{prop}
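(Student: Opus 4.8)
The plan is to combine two facts: the $\ell_\infty$ hypothesis $\|f\|_\infty\le\eta/\sqrt n$ forces many vertices to carry a non-negligible share of the $\ell_2$ mass of $f$, and the negativity of $\lambda$ forces each such ``heavy'' vertex to sit within bounded distance of a singleton nodal domain. We may assume $\|f\|_2=1$, since rescaling $f$ affects neither its nodal domains nor the hypotheses; write $M:=\|f\|_\infty\le\eta/\sqrt n$ and $g:=|f|$.

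\emph{Amplification from negativity.} Fix a vertex $v$ with $f(v)>0$ that is \emph{not} a singleton nodal domain. Then $v$ has a neighbour $u$ with $f(u)\ge 0$, so at most $d-1$ of its neighbours have $f<0$. Writing the eigenvector equation $\lambda f(v)=\sum_{u\sim v}f(u)$ and discarding the non-negative contribution of the neighbours with $f(u)\ge 0$ gives $\sum_{u\sim v,\,f(u)<0}|f(u)|\ge -\lambda f(v)=|\lambda|f(v)$, hence some neighbour $u$ with $f(u)<0$ satisfies
\[
g(u)\ \ge\ \frac{|\lambda|}{d-1}\,g(v)\ \ge\ \Bigl(1+\tfrac{\alpha}{d-1}\Bigr)g(v).
\]
Applying this to $-f$ handles $f(v)<0$. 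Iterating from any vertex $v_0$ with $g(v_0)>0$, we build a walk $v_0,v_1,v_2,\dots$ of (necessarily distinct) vertices of alternating sign whose $g$-values increase by a factor $1+\alpha/(d-1)$ at each step; since $g\le M$ this walk terminates, and it can only terminate at a singleton nodal domain. Hence every $v_0$ with $g(v_0)\ge M/(2\eta)$ lies within graph distance $r:=\bigl\lceil\log(2\eta)/\log(1+\alpha/(d-1))\bigr\rceil$ of a singleton nodal domain.

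\emph{Counting.} Let $H:=\{v:g(v)\ge M/(2\eta)\}$. By Cauchy--Schwarz, $1=\|f\|_2^2\le |H|M^2+n(M/2\eta)^2\le |H|M^2+\tfrac14$, and $M^2\le\eta^2/n$ gives $|H|\ge\tfrac34\,n/\eta^2$. By the previous paragraph $H$ is covered by the radius-$r$ balls around the vertices forming singleton nodal domains, and in a $d$-regular graph each such ball has at most $d(d-1)^r$ vertices. Therefore the number of singleton nodal domains is at least $|H|/(d(d-1)^r)\ge \tfrac34\,n\,\eta^{-2}d^{-1}(d-1)^{-r}$. Since $(d-1)^r\le (d-1)\,(2\eta)^{\log(d-1)/\log(1+\alpha/(d-1))}$ and $(2\eta)^2\eta^{-2}=4$, this is $\Omega\!\bigl(n\,(2\eta)^{-2-\log(d-1)/\log(1+\alpha/(d-1))}\bigr)$; the precise constant in the statement is recovered by optimizing the threshold defining $H$ and the ball estimate.

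\emph{Where the work is.} The hypothesis $\lambda\le-(d-1)-\alpha$ (rather than the optimal $-2\sqrt{d-1}$) enters only in the amplification step: the growth ratio is $|\lambda|/(d-1)$, which must exceed $1$ for the walk to terminate, and $d-1$ is exactly the number of ``escape'' directions at a non-singleton vertex. No treelike structure of $G$ is used, which is why the argument applies to arbitrary $d$-regular graphs; the sharper threshold in the main theorem comes from replacing the branching factor $d-1$ by the effective non-backtracking branching factor $\sqrt{d-1}$ in an almost-treelike graph, and this is the real obstacle there. The only genuine subtlety here is that vertices with $f(v)=0$ are never singleton nodal domains and are never visited by the amplifying walk (which starts at a vertex with $g>0$ and has strictly increasing $g$-values), so they cause no difficulty.
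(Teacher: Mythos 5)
Your proof is correct and follows essentially the same route as the paper's: use the eigenvector equation together with non-singleton-ness to amplify $|f|$ along a path until the $\ell_\infty$ bound forces termination at a singleton domain within distance roughly $\log(2\eta)/\log(1+\alpha/(d-1))$, then count the heavy vertices via $\|f\|_2^2=1$ and divide by the ball volume. The only discrepancy is a constant factor of order $(d-1)^2$ from rounding $r$ up and using the ball bound $d(d-1)^r$ instead of $d(d-1)^{\tilde k-1}$, which, as you note, does not affect the form of the bound.
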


 \begin{proof}
Assume that $u\in V$ is not a singleton nodal domain and $|f(u)|\geq \frac1{2\sqrt n}$. Then $u$ has at most $d-1$ neighbors $v$ such that $f(u)f(v)\leq 0$, so as $\sum_{v\sim u}f(v)=\lambda f(u)$, we must have that for some neighbor $v$ of $u$, $|f(v)|\geq (1+\alpha/(d-1))|f(u)|$. Repeating this argument, if there are no singleton nodal domains at distance at most $k$ from $u$, then there is a path $(u=x_0,\ldots, x_k)$ such that $|f(x_i)|\geq (1+\alpha/(d-1))|f(x_{i-1})|$ for each $i$. By \eqref{eq:infbound}, we must have $k\leq \tilde k$ for 
 \[
 \tilde k:=\frac{\log(2\eta)}{\log (1+\frac{\alpha}{d-1})}.
 \]
 
 Every $u$ with $|f(u)|\geq\frac{1}{2\sqrt n}$ must have a vertex $w$ that is a singleton nodal domain and $d(u,w)\leq \tilde k$. By \eqref{eq:infbound}, there are at least $\frac{3}{4}n/\eta^2$ vertices $u$ with $|f(u)|\geq 1/2\sqrt n$. 
 
 Any vertex $w$ has at most $d(d-1)^{\tilde k-1}$ vertices at distance at most $\tilde k$. Therefore there are at least
 \[
    \frac{\frac{3}{4}\cdot\frac{n}{\eta^2}}{d(d-1)^{\tilde k-1}}\ge\frac{n}{(2\eta)^{2+\frac{\log(d-1)}{\log(1+\alpha/(d-1))}}}
 \]
 singleton nodal domains.
 \end{proof}

The $\ell_\infty$ delocalization bound of \cite{huang2021spectrum} corresponds to $\eta=\polylog n$. Thus if $d\le 4,\alpha>0$ are fixed and $\lambda\le -(d-1)-\alpha$, \pref{prop:warmup} yields $\Omega(n/\polylog n)$ nodal domains for an eigenvector of a random $d$-regular graph, recovering the conclusion of \pref{thm:main} up to polylogarithmic factors in the spectral window $[-d,-(d-1)-\alpha]$. We recall that every nontrivial eigenvalue $\lambda$ of a random $d-$regular graph satisfies $|\lambda|\leq 2\sqrt{d-1}+o(1)$ with high probability \cite{friedman2003proof}, so for $d>5$ there are typically no eigenvectors with $\lambda\le -(d-1)$ and \pref{prop:warmup} is vacuous. To improve the required bound on $\lambda$ from $-(d-1)$ to $-2\sqrt{d-2}$, we shift from a local analysis of the entries of $f$ to a more global one.

\subsection{Proof outline and organization}\label{sec:sketch} 

In \pref{sec:prelim}, we go over notation and some preliminary statements. In \pref{sec:either}, we use the weak convergence result of Backhausz and Szegedy \cite{backhausz2019almost} to show that with high probability, if the $\ell_2$ mass of an eigenvector $f$ is not concentrated on a set of size $\delta n$ for a small constant $\delta$, then it has many singleton nodal domains. The remainder of the proof focuses on the case where the eigenvector $f$ is $\ell_2$-localized on a small set $S\subset G$. In \pref{sec:specrad} we give a deterministic upper bound of the spectral radius of ``almost treelike'' graphs in terms of their maximum degree, average degree, and girth; in particular, the bound implies that certain small subgraphs of $G$ have small spectral radius, with high probability. In \pref{sec:localized} we show that if $f$ has few singleton nodal domains in $S$, then we may pass to an edge subgraph $H\subset G[S]$ (of the induced subgraph $G[S]$) of maximum degree at most $d-1$ such that the restriction of $f$ to $S$, denoted by $f_S$, satisfies
\begin{equation}\label{eqn:hlarge}f_S^TA_Hf_S\approx f^TA_G f = \lambda.\end{equation} This is the step in which both the $\ell_2$-localization assumption and the $\ell_\infty$ bound of \cite{huang2021spectrum} are crucially used. If $\lambda$ is sufficiently negative, \eqref{eqn:hlarge} violates the spectral radius bound of \pref{sec:specrad} applied to $H$, so we conclude that there must be many singleton nodal domains of $f$ in $S$. We combine the above  cases to prove \pref{thm:main} in \pref{sec:many}. We conclude by showing that any sparse expander graph contains two nodal domains whose total size is large in \pref{sec:xml}.

\section{Preliminaries}\label{sec:prelim}
\subsection{Notation and basic definitions}
Given a graph $G$ on $n$ vertices, we shall use $V(G)$ to denote its vertex set, $E(G)$ to denote its edge set, and $A_G$ to denote its adjacency matrix.  We will order the $n$ eigenvalues of $A_G$ and denote them as:
\[
    \lambda_{\max}(G)=\lambda_1(G) \ge \lambda_2(G) \ge \dots \ge \lambda_n(G).
\]
For a subset of vertices $S\subseteq V(G)$ we use $G[S]$ to denote the induced subgraph of $G$ on $S$.  We use $N(S)$ to denote the set of vertices that have a neighbor in $S$.  We use $E(S,T)$ to denote the collection of edges with one endpoint in $S$ and one endpoint in $T$.  We use $\ol{S}$ to denote the the set of vertices $V(G)\setminus S$.  We use $B_G(S,\ell)$ to denote the induced subgraph on the set of all vertices of distance at most $\ell$ from $S$, and we write $B_G(v,\ell):=B_G(\{v\},\ell)$.

Given a vector $f\in\R^{V(G)}$, we use $f_S$ to denote the vector in $\R^S$ obtained by restricting $f$ to coordinates in $S$. We also will write $\|f\|:=\|f\|_2$.  For a matrix $A$, we use $\|A\|$ to denote the spectral norm of $A$.

We write $\Pr_{x\sim \mu}(E)$ to denote the probability that a random variable $x$ sampled from the distribution $\mu$ satisfies $E$.

\subsection{Graph theory}
We use the following standard facts about expansion and cycle counts in random regular graphs.
\begin{definition}
    The \emph{spectral expansion} of a graph $G$, denoted $\lambda(G)$, is defined as $\max\{\lambda_2(G),-\lambda_n(G)\}$.
\end{definition}

\begin{definition}
    The \emph{$\eps$-edge expansion} of a graph $G$, denoted $\Psi_{\eps}(G)$, is defined as:
    \[
        \Psi_{\eps}(G)\coloneqq \max_{\substack{S\subseteq V(G) \\ |S|\le \eps n}} \frac{|E(S,\ol{S})|}{|S|}.
    \]
\end{definition}

\begin{definition}[Bicycle-freeness]
    We say $G$ is $\ell$-bicycle-free if for every vertex $v$, $B_G(v,\ell)$ contains at most $1$ cycle.
\end{definition}

\begin{lemma}[Expander Mixing Lemma]    \label{lem:exp-mix-lem}
    Let $G$ be a $d$-regular graph, $S,T\subseteq V(G)$, and $e(S,T)$ is the number of tuples $(u,v)$ such that $u\in S, v\in T$ and $\{u,v\}\in E(G)$.  Then:
    \[
        e(S,T) \in \frac{d}{n}|S|\cdot|T| \pm \lambda(G)\sqrt{|S|\cdot|T|\cdot\left(1-\frac{|S|}{n}\right)\cdot\left(1-\frac{|T|}{n}\right)}.
    \]
\end{lemma}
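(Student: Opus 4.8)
The plan is to write $e(S,T)$ as a bilinear form in the adjacency matrix and split the two indicator vectors into their components along and orthogonal to the all-ones vector. Let $\mathbf{1}\in\R^{V(G)}$ denote the all-ones vector and $\mathbf{1}_S,\mathbf{1}_T\in\R^{V(G)}$ the indicator vectors of $S$ and $T$. Since the $(u,v)$ entry of $A_G$ is exactly the indicator that $(u,v)$ is an ordered adjacent pair, we have $e(S,T)=\mathbf{1}_S^{T}A_G\mathbf{1}_T$. Because $G$ is $d$-regular, $A_G\mathbf{1}=d\mathbf{1}$, and in particular $\mathbf{1}^{T}A_G\mathbf{1}=\sum_{u}\deg(u)=dn$. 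I would then decompose orthogonally
\[
    \mathbf{1}_S=\frac{|S|}{n}\mathbf{1}+s, \qquad \mathbf{1}_T=\frac{|T|}{n}\mathbf{1}+t,
\]
with $s,t\perp\mathbf{1}$ (note $\|s\|^{2}=\|\mathbf{1}_S\|^{2}-|S|^{2}/n$ and similarly for $t$).

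Expanding $\mathbf{1}_S^{T}A_G\mathbf{1}_T$ and using $A_G\mathbf{1}=d\mathbf{1}$ together with $s^{T}\mathbf{1}=t^{T}\mathbf{1}=0$, the two cross terms vanish and one is left with
\[
    e(S,T)=\frac{|S||T|}{n^{2}}\,\mathbf{1}^{T}A_G\mathbf{1}+s^{T}A_G t=\frac{d}{n}|S|\cdot|T|+s^{T}A_G t.
\]
Thus the lemma reduces to the estimate $|s^{T}A_G t|\le \lambda(G)\sqrt{|S|\cdot|T|\left(1-\tfrac{|S|}{n}\right)\left(1-\tfrac{|T|}{n}\right)}$. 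For this, I would invoke the spectral theorem: the restriction of the symmetric matrix $A_G$ to $\mathbf{1}^{\perp}$ has operator norm $\max\{|\lambda_i(G)|:2\le i\le n\}$, and since $\lambda_2(G)\ge\cdots\ge\lambda_n(G)$ and $|\lambda_i(G)|\le d$ for every $i$, this maximum equals $\max\{\lambda_2(G),-\lambda_n(G)\}=\lambda(G)$. Hence $|s^{T}A_G t|\le\lambda(G)\,\|s\|\,\|t\|$ by Cauchy--Schwarz, and the Pythagorean identities $\|s\|^{2}=|S|\bigl(1-\tfrac{|S|}{n}\bigr)$, $\|t\|^{2}=|T|\bigl(1-\tfrac{|T|}{n}\bigr)$ finish it.

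I do not anticipate a genuine obstacle here; this is the textbook proof of the expander mixing lemma. The only point that warrants a careful sentence is the identification of the operator norm of $A_G$ restricted to $\mathbf{1}^{\perp}$ with $\lambda(G)$ as defined in the paper (and not merely with $|\lambda_2(G)|$) --- this is exactly why the definition of spectral expansion takes the maximum of $\lambda_2(G)$ and $-\lambda_n(G)$, and the bound $|\lambda_i(G)|\le d$ ensures the identification remains valid even when $G$ is disconnected and $\mathbf{1}$ is not the unique top eigenvector.
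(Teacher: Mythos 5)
Your proof is correct and complete: the paper states this lemma without proof as a standard fact, and your argument is precisely the standard spectral proof (decompose the indicator vectors along and orthogonal to $\mathbf{1}$, kill the cross terms using $A_G\mathbf{1}=d\mathbf{1}$, and bound the residual bilinear form by the operator norm of $A_G$ on $\mathbf{1}^{\perp}$, which equals $\max\{\lambda_2(G),-\lambda_n(G)\}=\lambda(G)$ directly from the eigenvalue ordering). The one cosmetic point: the identification of that operator norm with $\lambda(G)$ needs only $\lambda_2\ge\lambda_n$, so the appeal to $|\lambda_i(G)|\le d$ is unnecessary, though harmless.
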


\begin{lemma}[{Edge expansion in random graphs \cite[Theorem 4.16]{HLW}}]   \label{lem:lossless-edge-expansion}
    Let $G$ be a random $d$-regular graph.  For every $\delta>0$, there is an $\eps > 0$ such that:
    \[
        \VExp_{\eps}(G) \ge d-2-\delta.
    \]
\end{lemma}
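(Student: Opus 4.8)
The plan is to establish the standard \emph{lossless expansion} property of random regular graphs via a first moment computation in the configuration model. Unpacked, the conclusion of \pref{lem:lossless-edge-expansion} is that with probability $1-o(1)$ every $S\subseteq V(G)$ with $|S|\le\eps n$ satisfies $|E(S,\ol S)|\ge(d-2-\delta)|S|$. Since $G$ is $d$-regular, $d|S|=2\,e(S)+|E(S,\ol S)|$ where $e(S)$ is the number of edges with both endpoints in $S$, so this is equivalent to the assertion that every such $S$ has $e(S)\le(1+\tfrac{\delta}{2})|S|$. It is worth noting up front that a purely spectral argument cannot prove this: combining \pref{lem:exp-mix-lem} with the near-Ramanujan bound $\lambda(G)\le 2\sqrt{d-1}+o(1)$ \cite{friedman2003proof} only yields $e(S)\le(\sqrt{d-1}+\tfrac{d\eps}{2}+o(1))|S|$, and $\sqrt{d-1}>1$ already for $d\ge3$; the gain from $\sqrt{d-1}$ down to $1+\tfrac{\delta}{2}$ is the ``lossless'' improvement, and it is what forces the combinatorial first moment argument below.

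I would work in the configuration model on $dn$ half-edges; this suffices because a uniformly random simple $d$-regular graph is distributed as the configuration model conditioned on the event ``no loops or multi-edges'', which has probability $\Omega(1)$ for fixed $d$, so any event of probability $1-o(1)$ in the configuration model also holds with probability $1-o(1)$ for $G$. Fix $1\le s\le\eps n$ and set $k_s:=\lceil(1+\tfrac{\delta}{2})s\rceil$, noting $k_s\ge s+1$. For a fixed $S$ with $|S|=s$, the event $\{e(S)\ge k_s\}$ is covered by the union, over choices of $2k_s$ of the $ds$ half-edges incident to $S$ together with a pairing of them into $k_s$ pairs, of the event that all $k_s$ prescribed pairs are matched; since any $k_s$ prescribed disjoint pairs of half-edges are simultaneously matched with probability at most $(dn-2k_s)^{-k_s}$, a short calculation (using $dn-2k_s\ge dn/2$ for $\eps$ small) gives
\[
\Pr\left[e(S)\ge k_s\right]\;\le\;\binom{ds}{2k_s}(2k_s-1)!!\,(dn-2k_s)^{-k_s}\;\le\;\frac{1}{k_s!}\left(\frac{d s^2}{n}\right)^{k_s}.
\]
Taking a union bound over the $\binom{n}{s}\le(en/s)^s$ sets $S$ of size $s$ and using $k_s\ge(1+\tfrac{\delta}{2})s$ (so that $eds^2/(k_sn)<1$ and the exponent may be lowered), the probability that some $s$-set is ``bad'' is at most $\bigl[B\,(s/n)^{\delta/2}\bigr]^{s}$ for a constant $B=B(d,\delta)$. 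Choosing $\eps$ so small that $B\eps^{\delta/2}\le\tfrac12$ makes this at most $2^{-s}$ for every $s\le\eps n$.

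It remains to sum over $s$, which I would split into two ranges. For $1\le s\le\log^2 n$ the per-$s$ bound is at most $B(\log^2 n/n)^{\delta/2}=n^{-\delta/2+o(1)}$, so these $\log^2 n$ terms contribute $o(1)$ in total; for $\log^2 n<s\le\eps n$ the bound $2^{-s}\le 2^{-\log^2 n}$ times the $\le n$ values of $s$ is again $o(1)$. Hence with probability $1-o(1)$ no vertex subset of size at most $\eps n$ is bad, which proves the lemma. The main obstacle — and essentially the only subtle point — is the small-$s$ regime: when $s<2/\delta$ the slack $\tfrac{\delta}{2}s$ is less than $1$, so $k_s=s+1$ and the statement is really asserting that no small vertex subset has more edges than vertices (``near-forest''), which is exactly where the first moment bound is tightest; one must therefore split the sum over $s$ as above rather than bound it by a single geometric series, since $\sum_{s\ge1}2^{-s}=O(1)$ is not $o(1)$.
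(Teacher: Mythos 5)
Your proof is correct. The paper does not prove \pref{lem:lossless-edge-expansion} at all --- it imports it as a black box from \cite[Theorem 4.16]{HLW} --- and your argument (reduce $|E(S,\ol S)|\ge(d-2-\delta)|S|$ to $e(S)\le(1+\tfrac{\delta}{2})|S|$ via $d$-regularity, then a first-moment union bound in the configuration model, with the sum over $s$ split to handle the fact that $\sum_s 2^{-s}$ is only $O(1)$) is essentially the standard proof given in that reference. Note also that you correctly read the paper's $\max$ in the definition of $\VExp_{\eps}$ as a $\min$, which is how the lemma is actually used in the proof of \pref{lem:untempered-subgraph}.
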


\begin{lemma}[{Bicycle-freeness in random regular graphs \cite[Lemma 9]{Bor}}]  \label{lem:bike-freeness-random-graph}
    Let $G$ be a random $d$-regular graph.  There exists an absolute constant $\BikeConst \in(0,1)$ such that with probability $1-o(1)$, $G$ is $\ell$-bicycle-free for any $\ell \le \BikeConst \log_{d-1}n$.
\end{lemma}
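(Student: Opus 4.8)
\emph{Proof plan.} The plan is to run a first-moment computation in the configuration model. First I would note that it suffices to handle the single radius $\ell_0 := \lfloor \BikeConst \log_{d-1} n \rfloor$: if $\ell' \le \ell_0$ then $B_G(v,\ell') \subseteq B_G(v,\ell_0)$, and a subgraph of a graph with at most one cycle again has at most one cycle. Next I would reduce the bad event to the presence of a small dense subgraph. Suppose $B_G(v,\ell_0)$ contains two independent cycles. Running breadth-first search from $v$ gives a spanning tree of the ball in which every root path has length $\le \ell_0$ and, because the cycle rank is $\ge 2$, there are at least two non-tree edges $e_1 = \{a_1,b_1\}$ and $e_2=\{a_2,b_2\}$. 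The union $H_0$ of the (at most four) root paths to $a_1,b_1,a_2,b_2$ together with $e_1,e_2$ is then a connected subgraph of $G$ with $|V(H_0)| \le 4\ell_0 + 1$ and $|E(H_0)| = |V(H_0)| + 1$. Passing to its $2$-core $H$ produces a connected subgraph of $G$ with cycle rank exactly $2$, minimum degree $\ge 2$, and $s := |V(H)| \le 4\ell_0 + 1$ vertices. So I would instead bound the probability that the configuration model contains any such $H$.

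The key structural observation is that such an $H$ is an edge-subdivision of one of only three ``base'' multigraphs (the theta graph, the dumbbell, and the figure-eight), each with $\le 2$ vertices and $\le 3$ edges; equivalently $H$ is a union of $\le 3$ internally disjoint paths through $\le 2$ branch vertices. Hence a candidate $H$ on $s$ vertices is specified by its base type ($O(1)$ choices), the images of its $\le 2$ branch vertices ($\le n^2$), the $\le 3$ path lengths (positive and summing to $|E(H)| = s+1$, so $\le s^2$ choices), and the ordered tuple of its remaining internal vertices ($\le n^{s-1}$), for a total of at most $O(s^2)\,n^s$ labelled candidates. Meanwhile, in the configuration model a fixed simple graph $H$ with $s$ vertices and $m=s+1$ edges appears with probability at most
\[
  \prod_{v\in V(H)} d(d-1)^{\deg_H(v)-1}\cdot\prod_{i=0}^{m-1}\frac{1}{dn-2i-1}
  \;\le\; d^s(d-1)^{s+2}\cdot\frac{2^{s+1}}{(dn)^{s+1}}
  \;\le\;\frac{C(d)\,\big(2(d-1)\big)^s}{n^{s+1}}
\]
for $n$ large, where the first product bounds the ways to choose half-edges realizing the edges of $H$ and the second bounds the probability that a fixed set of $m$ half-edge pairs is respected by the uniform perfect matching on the $dn$ half-edges. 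Multiplying and summing, the expected number of such subgraphs of $G$ is at most
\[
  \sum_{s=4}^{4\ell_0+1} O(s^2)\,n^s\cdot\frac{C(d)\,\big(2(d-1)\big)^s}{n^{s+1}}
  \;\le\;\frac{C'(d)\,\ell_0^2\,\big(2(d-1)\big)^{4\ell_0}}{n}.
\]
Because $\big(2(d-1)\big)^{4\ell_0}\le n^{4\BikeConst\log_{d-1}(2(d-1))}\le n^{8\BikeConst}$ for all $d\ge 3$ (using $\log_{d-1}2\le 1$), any absolute constant $\BikeConst<1/8$ makes this $o(1)$. Markov's inequality then gives $\ell_0$-bicycle-freeness of the configuration model with probability $1-o(1)$, and since a uniform random $d$-regular graph has the law of the configuration model conditioned on simplicity — an event of probability $\Omega(1)$ as $d$ is fixed — the same holds for $G$.

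The step I expect to be the main obstacle is controlling the number of candidate subgraphs: a naive union bound over all connected graphs on $s$ vertices with $s+1$ edges carries a factor $s^{\Theta(s)}$ and would only reach $\ell = O(\log n/\log\log n)$. Getting down to $O(s^2)\,n^s$ needs \emph{both} the breadth-first-search reduction (so that $s=O(\log n)$ and the polynomial-in-$s$ prefactors are negligible) \emph{and} the rigidity of minimal cycle-rank-$2$ graphs (only $O(1)$ homeomorphism types, so $H$ is pinned down by $O(1)$ branch vertices and connecting path lengths). The configuration-model edge-probability bound and the passage to the uniform model are routine.
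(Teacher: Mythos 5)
The paper does not prove this lemma; it is imported verbatim as Lemma~9 of Bordenave's paper \cite{Bor}. Your argument is a correct, self-contained proof, and it is essentially the standard one (and the same in spirit as Bordenave's): reduce bicycle-freeness of a radius-$\ell_0$ ball to the presence of a connected subgraph of minimum degree $2$ and cycle rank $2$ on $O(\ell_0)$ vertices, observe that such graphs have only three homeomorphism types so that the number of labelled candidates on $s$ vertices is $O(s^2)n^s$ rather than $s^{\Theta(s)}n^s$, run a first-moment bound in the configuration model, and transfer to the uniform model via the $\Omega(1)$ probability of simplicity. All the steps check out: the BFS reduction does produce a connected subgraph with $|E|=|V|+1$ whose $2$-core is connected with cycle rank exactly $2$, and the exponent bookkeeping $(2(d-1))^{4\ell_0}\le n^{8\BikeConst}$ correctly yields $o(1)$ for $\BikeConst<1/8$. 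One cosmetic remark: your per-factor bounds ($\le n^2$ for branch vertices times $\le n^{s-1}$ for internal vertices) would naively multiply to $n^{s+1}$; the stated total $O(s^2)n^s$ is nonetheless right because the branch and internal vertices partition the $s$ vertices, so it is worth phrasing the count as a single ordered choice of all $s$ vertices. This is a presentational point, not a gap.
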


We write $G\backslash F$ to signify $(V,E\backslash F)$. We use \pref{lem:bike-freeness-random-graph} to derive the following:
\begin{lemma}   \label{lem:excess-to-girth}
    Let $G$ be a random $d$-regular graph.  Then with probability $1-o_n(1)$ there exists a collection of edges $F$ with cardinality bounded by $(d-1)n^{1-\BikeConstexp/2}$ such that $G\setminus F$ has girth $\ell\coloneqq\frac{\BikeConst}{2}\log_{d-1} n$.
\end{lemma}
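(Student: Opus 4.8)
The plan is to build $F$ explicitly: set $\ell_0 := \BikeConst\log_{d-1}n$ and note that $\ell = \tfrac{\BikeConst}{2}\log_{d-1}n = \ell_0/2$, call a cycle \emph{short} if it has length less than $\ell$, and take $F$ to consist of one edge from each short cycle of $G$. Removing edges can never create a new cycle, and every short cycle of $G$ has lost an edge, so $G\setminus F$ has no cycle of length less than $\ell$, i.e.\ girth at least $\ell$ (which is how I read ``girth $\ell$'' in the statement, that being the direction needed downstream). So the whole content is to argue that, with probability $1-o(1)$, the number of short cycles is at most $(d-1)n^{1-\BikeConst/2}$; then $|F|$ is at most that number and we are done. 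I will condition on the two high-probability events (a) $G$ is $\ell_0$-bicycle-free (\pref{lem:bike-freeness-random-graph}, applicable since $\ell_0\le \BikeConst\log_{d-1}n$), and (b) $G$ has at most $(d-1)n^{1-\BikeConst/2}$ short cycles.

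To get (b) I would use the standard first-moment estimate that the expected number of cycles of length exactly $k$ in a random $d$-regular graph is at most $(d-1)^k$ (valid throughout the range $k = O(\log n)$, and in fact with an extra $1/2k$ factor). Summing over $3\le k<\ell$ and using $d\ge 3$, the expected number of short cycles is at most $\sum_{k<\ell}(d-1)^k \le (d-1)^{\ell} = n^{\BikeConst/2}$. By Markov's inequality the number of short cycles exceeds $(d-1)n^{1-\BikeConst/2}$ with probability at most $n^{\BikeConst-1}/(d-1)=o(1)$, where it is essential that $\BikeConst\in(0,1)$ — this is part of the conclusion of \pref{lem:bike-freeness-random-graph} — so that the exponent $\BikeConst-1$ is negative. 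Note that the leading $(d-1)$ and the exponent $1-\BikeConst/2$ in the lemma come out of exactly this Markov step, and the argument is robust to the precise constant in the first-moment bound since $\BikeConst/2 < 1-\BikeConst/2$ strictly.

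The role of bicycle-freeness (beyond pinning down the constant $\BikeConst$) is to make the edge-removal bookkeeping transparent: if two distinct short cycles $C_1\neq C_2$ shared a vertex $v$, then traversing each cycle from $v$ shows $C_1\cup C_2\subseteq B_G(v,\lceil \ell/2\rceil)\subseteq B_G(v,\ell_0)$, so $B_G(v,\ell_0)$ would contain two distinct (hence independent) cycles, contradicting $\ell_0$-bicycle-freeness. Thus the short cycles are pairwise vertex-disjoint, so picking one edge per short cycle gives an $F$ of cardinality exactly the number of short cycles. (Even without this one still has $|F|\le \#\{\text{short cycles}\}$, which is all that is strictly needed; and as an alternative to the first-moment input one could bound the number of short cycles by a packing argument — bicycle-freeness forces distinct short cycles to be roughly $3\ell$-separated, so radius-$\approx 3\ell/2$ balls around a vertex of each are disjoint and each has at least $(d-1)^{\approx 3\ell/2}$ vertices, which even yields the stronger bound $n^{1-3\BikeConst/4+o(1)}$; but this needs a ball-volume lower bound for bicycle-free graphs, so the first-moment route is cleaner and matches the stated constant.)

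I do not expect a genuine obstacle here. The only non-routine ingredient is the first-moment bound on the number of short cycles, which is entirely standard because $\ell = O(\log n)$ stays comfortably below $\log_{d-1} n$. The only things to watch are the off-by-one/parity details in the ball-containment step establishing vertex-disjointness, keeping the constant in the geometric sum under $n^{\BikeConst/2}$ so Markov lands on precisely the claimed bound, and fixing the convention that ``girth $\ell$'' means ``girth at least $\ell$''.
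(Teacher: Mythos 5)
Your proposal is correct, but it takes a genuinely different route from the paper. The paper's proof is a deterministic packing argument carried out entirely on the high-probability event of \pref{lem:bike-freeness-random-graph}: it shows that the balls $B_G(C,\ell)$ around distinct short cycles $C$ are pairwise vertex-disjoint (exactly your containment argument, run at radius $2\ell$), and then — this is the step you flagged as a missing ingredient — observes that bicycle-freeness forces each such ball to contain at least $(d-1)^{\ell-1}=n^{\BikeConstexp/2}/(d-1)$ vertices, since the ball contains no cycle other than $C$ itself and hence grows like a $(d-1)$-ary tree along any branch leaving the cycle. Disjointness then gives at most $(d-1)n^{1-\BikeConstexp/2}$ short cycles directly, with no first-moment computation and no Markov step. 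Your route instead imports the standard estimate $\E[\#\{k\text{-cycles}\}]\lesssim (d-1)^k$ for $k=O(\log n)$ and applies Markov; this is a correct and arguably more robust argument (it does not need vertex-disjointness at all, and the exponent gap $\BikeConstexp/2<1-\BikeConstexp/2$ absorbs any constants), but it relies on a cycle-count input that the paper never cites, whereas the paper's argument reuses only the bicycle-freeness lemma it has already stated. The ball-volume lower bound you were worried about is a one-line consequence of bicycle-freeness, so your ``alternative'' packing sketch is essentially the paper's actual proof. Both arguments land on the same bound, and your reading of ``girth $\ell$'' as ``girth at least $\ell$'' matches the paper's usage.
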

\begin{proof}
    Let $\calC$ be the collection of all cycles in $G$ of length at most $\ell$.
    By \pref{lem:bike-freeness-random-graph}, $G$ is $2\ell$-bicycle-free.  Consequently, the collection of graphs given by $\calC'\coloneqq\{B_{G}(C,\ell):C\in\calC\}$ must be pairwise vertex-disjoint.  Indeed, if there are distinct $C,C'\in\calC$ for which $B_{G}(C,\ell)$ and $B_{G}(C',\ell)$ share a vertex $v$, then $B_{G}(v,2\ell)$ contains both $C$ and $C'$ contradicting bicycle-freeness.

    By bicycle-freeness, for any $C\in\calC$, the number of vertices in $B_{G}(C,\ell)$ is at least $(d-1)^{\ell-1} = \frac{n^{\BikeConstexp/2}}{d-1}$, and by vertex-disjointness of the balls around cycles, $|\calC'|\le(d-1)n^{1-\BikeConstexp/2}$.  However, since $|\calC|=|\calC'|$, we have the same bound on $|\calC|$.  We can then construct $F$ by choosing one edge per $C\in\calC$, which completes the proof.
\end{proof}

\subsection{Delocalization of eigenvectors of random regular graphs}
A key ingredient in our proof is the following result about $\ell_{\infty}$-delocalization of eigenvectors in random regular graphs, as stated in \cite[Theorem 1.4]{huang2021spectrum}, which built on the previous result \cite{bauerschmidt2019local}.
\begin{theorem} \label{thm:eigvec-deloc}
    Let $d\ge 3$ be a constant, and let $G$ be a random $d$-regular graph.  With probability $1-O(n^{-1+o(1)})$ for all eigenvectors $v$:
    \[
        \|v\|_{\infty} \le \frac{\log^{\HYConst} n}{\sqrt{n}}\|v\|,
    \]
    where $\HYConst\le 150$ is an absolute constant independent of $d$.
\end{theorem}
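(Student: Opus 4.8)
The plan is to deduce the $\ell_\infty$ bound from a \emph{local law} for the Green's function $G(z) = (A_G - z)^{-1}$, i.e.\ from the statement that $G_{xx}(z)$ is close to the resolvent entry $m_d(z)$ of the adjacency operator on the infinite $d$-regular tree, uniformly over vertices $x$ and down to spectral scale $\Im z = \eta_*:=\polylog(n)/n$; this is the route of \cite{bauerschmidt2019local,huang2021spectrum}. The bridge is the spectral identity
\[
\Im G_{xx}(E + i\eta) \;=\; \sum_{k} \frac{\eta\,|\psi_k(x)|^2}{(\lambda_k - E)^2 + \eta^2},
\]
where $\psi_1,\dots,\psi_n$ are the normalized eigenvectors of $A_G$. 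Taking $E=\lambda_k$ retains only nonnegative terms and gives $|\psi_k(x)|^2 \le \eta\,\Im G_{xx}(\lambda_k + i\eta)$ for all $x,k$ and all $\eta>0$. So it suffices to show that, with probability $1-O(n^{-1+o(1)})$, one has $\max_x \Im G_{xx}(z) \le \polylog(n)$ \emph{simultaneously} for all $z$ on a net of spacing $\eta_*$ over a fixed neighborhood of the spectrum with $\Im z = \eta_*$; choosing $\eta=\eta_*$ in the identity, and using that $\Im G_{xx}(\,\cdot\,+i\eta_*)$ is comparable at points within distance $\eta_*$ to pass from the net to all $\lambda_k$, then yields $\|\psi_k\|_\infty^2 \le \polylog(n)/n$ deterministically.

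First I would derive the self-consistent equation. The infinite-tree resolvent satisfies $m_d(z) = (-z - d\,m_d^{\circ}(z))^{-1}$ where $m_d^{\circ}(z) = (-z - (d-1)m_d^{\circ}(z))^{-1}$ is the root resolvent of the rooted $(d-1)$-ary tree, and $\Im m_d$ on $\R$ is the Kesten--McKay density. Applying the Schur complement formula at a vertex $x$, and then recursively, one obtains a \emph{perturbed} version of these fixed-point equations for the true $G_{xx}(z)$, whose error is a sum, over the bounded-radius neighborhood of $x$, of Green's function quantities on graphs with a few vertices removed. Since a random $d$-regular graph is $\ell$-bicycle-free for $\ell=\Omega(\log_{d-1}n)$ by \pref{lem:bike-freeness-random-graph}, these neighborhoods are trees with high probability, so the relevant local Green's functions are explicit functions of the boundary data, and the problem reduces to controlling those error sums. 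Given this, the theorem follows from (a) stability of the tree fixed-point map away from the spectral edges $\pm2\sqrt{d-1}$ (with a more delicate analysis near them), and (b) showing the error sums are $o(1)$ with the required probability down to $\eta=\eta_*$.

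Step (b) at the scale $\eta_*$, the smallest scale at which individual eigenvectors are resolved, is the main obstacle. Two ingredients are needed. The first is a \emph{bootstrap in $\eta$}: prove the local law first at scale $\eta\sim1$, then decrease $\eta$ in small multiplicative steps, feeding the a priori control from the previous scale into the self-consistent equation at the next. The second, special to random regular graphs because the edges are not independent, is the \emph{local resampling (switching)} method: re-randomize the edges in a bounded neighborhood of $x$, show by a resolvent-perturbation estimate that this hardly changes $G_{xx}(z)$, and use the fresh randomness to prove concentration of the error sums (a fluctuation-averaging estimate). Executing (b) with enough uniformity to absorb a union bound over all $x$ and over the $O(n/\polylog(n))$ net points --- with per-event failure probability $n^{-1-\Omega(1)}$ --- is the technically heaviest part, and is exactly the content of \cite{bauerschmidt2019local} with the eigenvector-level refinement supplied by \cite{huang2021spectrum}.
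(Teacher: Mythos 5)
This theorem is not proved in the paper at all: it is imported verbatim as \cite[Theorem~1.4]{huang2021spectrum} (building on \cite{bauerschmidt2019local}), so there is no internal proof to compare against. Your sketch correctly identifies the route those references take, and the reduction you give from a local law to $\ell_\infty$ delocalization is the standard one and is sound: from $\Im G_{xx}(E+i\eta)=\sum_k \eta|\psi_k(x)|^2/((\lambda_k-E)^2+\eta^2)$ with $E=\lambda_k$ one indeed gets $|\psi_k(x)|^2\le \eta\,\Im G_{xx}(\lambda_k+i\eta)$ (and, summing over a degenerate eigenspace, the bound survives for arbitrary unit vectors within it), so a uniform bound $\Im G_{xx}\le\polylog(n)$ at scale $\eta_*=\polylog(n)/n$ gives the claim. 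Likewise the ingredients you name --- Schur complement at a vertex, tree-like neighborhoods via bicycle-freeness, bootstrap in $\eta$, and local resampling/switchings to manufacture independence and prove fluctuation averaging --- are exactly the machinery of \cite{bauerschmidt2019local,huang2021spectrum}, with the extension of the local law all the way to the spectral edge being the main contribution of the latter.

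That said, be clear about what your proposal is: a roadmap whose entire technical content (steps (a) and (b)) is deferred to the cited papers, not a self-contained argument. Since the paper itself uses the theorem as a black box, that is an acceptable treatment, but two points deserve care if you ever wanted to flesh it out. First, your bookkeeping of probabilities is off: a union bound over $n$ vertices and $O(n/\polylog n)$ net points covers roughly $n^2$ events, so a per-event failure probability of $n^{-1-\Omega(1)}$ does not yield the stated overall probability $1-O(n^{-1+o(1)})$; you need per-event failure more like $n^{-3+o(1)}$ (or, as in the references, a local law proved uniformly on the whole spectral domain at once). Second, the stability of the self-consistent equation genuinely degenerates at $\pm2\sqrt{d-1}$, and the parenthetical ``more delicate analysis near them'' is hiding the hardest part of \cite{huang2021spectrum}; for the application in this paper (eigenvalues $\lambda\le-2\sqrt{d-2}-\alpha$, which for small $d$ sit near the edge $-2\sqrt{d-1}$) the edge regime cannot be discarded.
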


\subsection{Gaussian wave}\label{sec:gwave}

Our results also use results concerning the before-mentioned Gaussian wave.
\begin{definition}
    Consider the infinite $d$-regular tree $T_d$ with vertex set $V_d$ and origin $o$. An \emph{eigenvector process} with eigenvalue $\lambda$ is a joint distribution $\{X_v\}_{v\in V_d}$, such that it is invariant under all automorphisms of the tree, $\E(X_o^2)=1$, and satisfies the eigenvector equation
\begin{equation}\label{eq:eigenvector}
\lambda X_o=\sum_{v\sim o} X_v
\end{equation}
with probability 1. 
\end{definition}
Observe that the eigenvector process must satisfy the eigenvector equation at every vertex by automorphism invariance, and that by taking the expectation of \eqref{eq:eigenvector} and automorphism invariance, if $\E(X_o)\neq 0$, then $\lambda=d$. 
\begin{definition}
A \emph{Gaussian wave} is an eigenvector process that is also a Gaussian process.
\end{definition}
\begin{theorem}[Theorem 1.1 of \cite{elon2009gaussian}]
For any $-d\leq \lambda \leq d$, there exists a unique Gaussian wave with parameter $\lambda$. 
\end{theorem}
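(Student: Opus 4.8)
I would split the statement into an elementary uniqueness half and an existence half that reduces to a single positive-semidefiniteness check on the tree. For uniqueness, let $\{X_v\}$ be a Gaussian wave with parameter $\lambda$. Since $\mathrm{Aut}(T_d)$ acts transitively on ordered pairs of vertices at a fixed distance, invariance forces the covariance to be radial, $\mathrm{Cov}(X_u,X_v)=\Gamma(d(u,v))$ with $\Gamma(0)=1$, and the mean is a constant which (as observed in the text) vanishes unless $\lambda=d$, so we may take the process centered. The eigenvector equation holds a.s.\ at every vertex $w$, hence $\E\big[(\lambda X_w-\sum_{v\sim w}X_v)\,X_u\big]=0$ for all $u$; rewriting through $\Gamma$ and using that exactly one neighbour of $w$ lies on the geodesic towards $u$ (at distance $d(u,w)-1$) while the other $d-1$ are at distance $d(u,w)+1$, this gives $\Gamma(1)=\lambda/d$ together with the recursion
\[
(d-1)\,\Gamma(k+1)=\lambda\,\Gamma(k)-\Gamma(k-1),\qquad k\ge 1 .
\]
With $\Gamma(0)=1$ this determines $\Gamma$, and a centered Gaussian process is determined by its covariance; uniqueness follows. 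The degenerate endpoints $\lambda=\pm d$ (where $X_v\equiv X$, resp.\ $X_v\equiv(-1)^{d(o,v)}X$, with $X\sim N(0,1)$) are handled by hand.

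For existence I would run this backwards. Define $\Gamma$ by the recursion above, and let $\{X_v\}$ be the centered Gaussian field with $\mathrm{Cov}(X_u,X_v)=\Gamma(d(u,v))$, which exists (Kolmogorov extension) as soon as this radial kernel is positive semidefinite on $T_d$. Granting that, automorphism-invariance and $\E[X_o^2]=\Gamma(0)=1$ are immediate, and the same moment expansion at any vertex $w$ gives $\E\big[(\lambda X_w-\sum_{v\sim w}X_v)^2\big]=\lambda^2\Gamma(0)-2\lambda d\,\Gamma(1)+d\,\Gamma(0)+d(d-1)\Gamma(2)$, which vanishes after inserting $\Gamma(1)=\lambda/d$ and $\Gamma(2)=(\lambda^2-d)/(d(d-1))$; hence the eigenvector equation holds a.s.\ everywhere and $\{X_v\}$ is a Gaussian wave. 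So all of existence is the assertion that the radial kernel $\Gamma$ is positive semidefinite on $T_d$ precisely when $-d\le\lambda\le d$. I would recognize $\Gamma$ as the spherical function $\phi_\lambda$ of $T_d$ --- the unique radial eigenfunction of the adjacency operator $A$ on $T_d$ with eigenvalue $\lambda$ normalized by $\phi_\lambda(o)=1$, whose radialized eigenvalue equation is exactly the recursion above --- and note that each $\phi_\lambda(k)$ is a polynomial in $\lambda$.

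For the positive-semidefiniteness I would split on $|\lambda|$. When $|\lambda|\le 2\sqrt{d-1}$ --- the only range relevant here, since eigenvalues of random $d$-regular graphs essentially lie there --- use the resolvent: for $\mathrm{Im}\,z>0$ one has $\mathrm{Im}\,(A-z)^{-1}=\mathrm{Im}(z)\,(A-z)^{-1}(A-\bar z)^{-1}\succeq 0$, so the radial kernel $(u,v)\mapsto\mathrm{Im}\,G_z(d(u,v))$, with $G_z(u,v):=\langle\delta_u,(A-z)^{-1}\delta_v\rangle$ the (explicit) tree Green's function, is positive semidefinite; the resolvent equation $(A-z)G_z=I$ forces $G_z(\cdot)$ to satisfy the recursion with $z$ in place of $\lambda$, so letting $z=\lambda+i\varepsilon\downarrow\lambda$ in the interior of $[-2\sqrt{d-1},2\sqrt{d-1}]$ and normalizing by $\mathrm{Im}\,G_z(0)\to\pi\rho_{\mathrm{KM}}(\lambda)>0$ exhibits $\phi_\lambda$ as a pointwise limit of positive semidefinite kernels; the endpoints follow by continuity, $\phi_\lambda(k)$ being polynomial in $\lambda$. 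When $2\sqrt{d-1}<|\lambda|\le d$ --- the complementary series of $\mathrm{Aut}(T_d)$, where $\lambda$ is outside the $\ell^2$-spectrum and the resolvent gives nothing --- I would either invoke the classical positive-definiteness of complementary-series spherical functions on trees, or construct the wave directly as a random superposition of boundary plane waves: with $\partial T_d$ the boundary, $\nu_o$ the harmonic measure at $o$, and $\psi_s^\omega(v)=P(v,\omega)^s$ the $s$-th power of the Poisson kernel $P(v,\omega)=(d\nu_v/d\nu_o)(\omega)$, one has $A\psi_s^\omega=((d-1)^s+(d-1)^{1-s})\psi_s^\omega$, so $s=s(\lambda)$ realizes every $\lambda\in[-d,d]$, and then $X_v:=\int_{\partial T_d}\psi_s^\omega(v)\,dW(\omega)$ for a Gaussian field $W$ on $\partial T_d$ carrying the invariant kernel of the corresponding series is a pointwise solution of the eigenvalue equation whose covariance is the matrix coefficient $\phi_\lambda(d(u,v))$, hence manifestly positive semidefinite.

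I expect the genuine obstacle to be exactly this complementary range $2\sqrt{d-1}<|\lambda|\le d$, where the spectral-theoretic shortcut is unavailable: there one must cite or reprove positive-definiteness of the complementary-series spherical functions of $\mathrm{Aut}(T_d)$ --- equivalently, set up ends, harmonic measure, the Poisson kernel and the (non-diagonal) invariant inner product on the complementary series and verify the matrix-coefficient identity. Everything else --- the recursion, uniqueness, the reduction of existence to positive-definiteness, and the tempered range via the resolvent --- is routine, and since the present paper only ever uses $|\lambda|<d$ (indeed essentially $|\lambda|\le 2\sqrt{d-1}$), the resolvent argument alone already suffices for its needs.
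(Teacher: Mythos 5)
This theorem is imported verbatim from Elon's paper and is not proved in the present text, so there is no in-paper argument to measure your proposal against; I can only assess it on its own terms, and as a plan it is correct and follows what is essentially the standard (and essentially Elon's) route. Uniqueness is exactly as you say: two-point homogeneity of $T_d$ forces a radial covariance $\Gamma(d(u,v))$, testing the eigenvector equation against $X_u$ gives $\Gamma(1)=\lambda/d$ and the three-term recursion $(d-1)\Gamma(k+1)=\lambda\Gamma(k)-\Gamma(k-1)$, and a centered Gaussian process is determined by its covariance. Existence correctly reduces to positive semidefiniteness of the resulting radial kernel (the spherical function $\phi_\lambda$), and checking that the defect $\lambda X_w-\sum_{v\sim w}X_v$ has vanishing second moment is the right way to recover the eigenvector equation almost surely; your computations ($\Gamma(2)=(\lambda^2-d)/(d(d-1))$, the cancellation, the normalization of $\mathrm{Im}\,G_z$ by $\mathrm{Im}\,G_z(0)$) all check out. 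Two caveats. First, on the complementary range $2\sqrt{d-1}<|\lambda|\le d$ your proposal bottoms out in a citation (positive definiteness of the complementary-series spherical functions of $\mathrm{Aut}(T_d)$) or in a boundary construction whose invariant inner product you would still have to build, so for the full statement over all of $[-d,d]$ this is an honest reduction rather than a proof; it is harmless for the present paper, since every eigenvalue to which the Gaussian wave is applied lies with high probability within $2\sqrt{d-1}+o(1)$, where the resolvent argument suffices. Second, at $\lambda=d$ uniqueness as literally stated requires reading ``Gaussian process'' as centered: for any $c\in[-1,1]$ the field $X_v\equiv c+Y$ with $Y\sim N(0,1-c^2)$ is automorphism-invariant, satisfies the eigenvector equation, and has $\E(X_o^2)=1$, so there is a one-parameter family of non-centered candidates. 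Your ``handled by hand'' remark should be made precise on that point, but it does not affect any $\lambda$ with $|\lambda|<d$, which is all this paper uses.
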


We call this Gaussian wave $\Lambda_\lambda$.
\begin{definition}
The \emph{L\'evy Prokhorov distance} between two Borel probability measures $\mu_1$ and $\mu_2$ on $\R^k$ is given by 
\begin{equation*}
\tilde d(\mu_1,\mu_2):=\inf\{\epsilon>0|\forall A\in \calB_k,\mu_1(A)\leq \mu_2(A_\epsilon)+\epsilon
\textnormal{ and } \mu_2(A)\leq \mu_1(A_\epsilon)+\epsilon\},
\end{equation*}
where $\calB_k$ is the set of Borel measurable sets in $\R^k$ and $A_\epsilon$ is the neighborhood of radius $\epsilon$ around $A$.
\end{definition}

Define $C_\ell$ to be the number of vertices in $B_{T^d}(v,\ell)$, where $T_d$ is the infinite $d$-regular tree, and $v$ is an arbitrary vertex. Namely \[C_\ell:=
1+\frac{d((d-1)^{\ell}-1)}{d-2}.
\]
A vector $f\in \R^{V(G)}$ on the vertices of a graph $G$ on $n$ vertices defines the following distribution $\nu_{G,f,\ell}$ on $\R^{C_\ell}$. Select a vertex $u\in V$ uniformly at random. Order the vertices in $B(u,\ell)$ by starting a breadth first search at $u$, breaking ties in the order of the search uniformly at random. Create the vector $x:=(x_1,\ldots x_{C_\ell})$ such that $x_k:=\sqrt nf(u_k)$, where $u_k$ is the $k$th vertex in this breadth first search. If $B(u,\ell)$ has fewer than $C_\ell$ vertices, then have $x_k=0$ for $1\leq k\leq C_\ell$. Finally, let $\nu_{G,f,\ell}$ be the distribution of $\overline x(u)$.

\begin{theorem}[Theorem 2 of \cite{backhausz2019almost}]\label{thm:gaussianwave}
 For every $d\geq 3$, $\epsilon>0$ and $R\in \N$, there exists $N$ such that for $n>N$, with probability at least $1-\epsilon$, a random regular graph of degree $d$ on $n$ vertices has the following property. Any eigenvector $f$ of $G$ is such that $\nu_{G,f,R}$ is at most $\epsilon$ in L\'evy-Prokhorov distance from the distribution of $\sigma\cdot \Lambda_\lambda$ restricted to the vertices of $B_{T^d}(o,R)$ for some $\sigma\in[0,1]$, where $\lambda$ is the eigenvalue of $f$.
\end{theorem}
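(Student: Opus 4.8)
This statement is, up to a translation of notation, Theorem~2 of \cite{backhausz2019almost}; the only points requiring care in the reformulation are to phrase the conclusion uniformly over all $n$ eigenvectors and in terms of the L\'evy--Prokhorov distance of the concrete profile $\nu_{G,f,R}$, both of which are soft. So the plan is to sketch the underlying argument via compactness and contradiction. Suppose the conclusion fails for some fixed $\eps>0$: then there is a sequence $n_k\to\infty$ and an event of probability at least $\eps$ on which $G=G_{n_k}$ has a unit-norm eigenvector $f_k$ with eigenvalue $\lambda_k$ whose profile $\nu_{G_{n_k},f_k,R}$ lies at L\'evy--Prokhorov distance at least $\eps$ from the law of $\sigma\Lambda_{\lambda_k}$ on $B_{T_d}(o,R)$ for every $\sigma\in[0,1]$. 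Since the coordinates of $\nu_{G_{n_k},f_k,R}$ are $\sqrt{n}f_k(\cdot)$ read off a uniformly random ball, their total second moment equals $n\,\E_u\!\sum_{v\in B(u,R)}f_k(v)^2\le C_R$ by a double-counting argument, so this family of measures on $\R^{C_R}$ is tight; passing to a subsequence we may assume $\lambda_k\to\lambda\in[-d,d]$ and $\nu_{G_{n_k},f_k,R}\to\mu$ weakly.

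The next step is to identify $\mu$ as the $B_{T_d}(o,R)$-marginal of an automorphism-invariant process $\{X_v\}_{v\in V_d}$ on $T_d$ with $\E X_o^2\le1$ satisfying $\lambda X_o=\sum_{v\sim o}X_v$ almost surely. Invariance comes from the uniformly random choice of root (with uniform tie-breaking) in the definition of the profile; the eigenvector equation survives the limit because it holds exactly at every vertex of $G_{n_k}$, the graph is locally a tree on the relevant scale (Benjamini--Schramm convergence to $T_d$; quantitatively, the bicycle-freeness of \pref{lem:bike-freeness-random-graph}), and $\lambda_k\to\lambda$; the second-moment bound is inherited from $\|f_k\|=1$ via Fatou. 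Setting $\sigma^2:=\E X_o^2\in[0,1]$, when $\sigma>0$ the rescaled process $X/\sigma$ is a unit-variance invariant eigenvector process with eigenvalue $\lambda$, while $\sigma=0$ forces $\mu=\delta_0$, which is already consistent with the conclusion.

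The crux --- and the main obstacle --- is to show that $X/\sigma$ must equal the Gaussian wave $\Lambda_\lambda$ in distribution; given this, uniqueness of the Gaussian wave (Theorem~1.1 of \cite{elon2009gaussian}) forces $\mu$ to be the law of $\sigma\Lambda_\lambda$ on $B_{T_d}(o,R)$, contradicting the standing assumption and proving the theorem. Two facts are needed: (i) the limit process $X$ is not an arbitrary invariant eigenvector process but a \emph{typical} one, i.e.\ it lies in the closure of the factor-of-i.i.d.\ processes on $T_d$ --- this is the point where one genuinely uses that $G$ is a \emph{random} regular graph and not merely a locally treelike one, exploiting spectral concentration and delocalization to approximate an eigenvector locally by a local function of i.i.d.\ vertex labels; and (ii) every typical invariant eigenvector process on $T_d$ is Gaussian. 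Fact~(ii) is proved by an entropy argument: one controls the per-vertex differential entropy of the process using the linear recursion imposed by the eigenvector equation together with the exponential growth of the tree, and shows that, among processes with the forced covariance structure, this entropy is maximized exactly by the Gaussian wave. I expect (i) and the sharp entropy inequality in (ii) to be the genuinely hard steps; the rest --- tightness, subsequence extraction, and converting weak convergence of the $\mu$'s back into a uniform, L\'evy--Prokhorov statement over all eigenvectors --- is routine bookkeeping.
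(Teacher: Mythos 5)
This statement is not proved in the paper at all: it is imported verbatim as Theorem~2 of \cite{backhausz2019almost}, so there is no internal argument to compare against. Your outline does correctly reproduce the architecture of the Backhausz--Szegedy proof (tightness and subsequence extraction, identification of the limit as an automorphism-invariant eigenvector process with $\E X_o^2\le 1$, and then Gaussianity of the limit plus uniqueness of the Gaussian wave from \cite{elon2009gaussian}), and the bookkeeping you call routine --- the second-moment bound $\E_u\sum_{v\in B(u,R)}nf_k(v)^2\le C_R$ by double counting, inheritance of the eigenvector equation through local treelikeness, the $\sigma=0$ degenerate case --- is indeed routine and correct.

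But the proposal is not a proof, and one of the two black boxes is stated in a form that is likely not provable. Step (ii), the entropy argument showing that the relevant invariant eigenvector processes are Gaussian, is the entire content of \cite{backhausz2019almost}; naming it does not discharge it. More substantively, your step (i) asserts that the limit process lies in the closure of the factor-of-i.i.d.\ processes on $T_d$. That is not what Backhausz and Szegedy prove or use: their argument runs through the weaker notion of \emph{typical processes}, defined via local-global (Bollob\'as-independence) convergence of random regular graphs, and whether every typical process lies in the factor-of-i.i.d.\ closure is an open problem. As literally written, your step (i) is a stronger claim than the literature supports, so the contradiction you set up would not close even granting the entropy inequality. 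If you intend this as a proof sketch rather than a citation, you should replace ``closure of factor-of-i.i.d.'' with ``typical process'' and acknowledge that both the typicality of eigenvector limits and the entropy-forces-Gaussianity step are the theorems being cited, not steps you have supplied.
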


In fact, \cite{backhausz2019almost} proves that there is an $N$ and a $\delta>0$ such that a $G(n,d)$ graph has this property for all normalized vectors $f$ such that there exists a constant $\lambda$ such that $\|(A-\lambda I)f\|\leq \delta$. Namely, this statement is true for all ``pseudo-eigenvectors''.

\section{Either $\ell_2$-localization or many nodal domains}\label{sec:either}
In this section, we show (\pref{lem:many-isolated-or-localized}) that if an eigenvector of a random regular graph is appropriately delocalized in $\ell_2$, then its proximity to the Gaussian wave implies it has many nodal domains. We begin by showing that the root vertex in a Gaussian wave with negative parameter $\lambda$ has a constant probability of being a singleton domain.

\begin{lemma}\label{lem:gauswave}
For $d\geq 3$ and $0<\alpha\leq d$, let
\[
\GWConst:=\frac{\alpha^d}{3^{d+2}d^{d+1}}.
\]

Assume that $\lambda\leq-\alpha$. With probability at least $\GWConst$, $\{o\}$ is a
singleton nodal domain in $\Lambda_\lambda$ with all entries in $B(o,1)$ of modulus at least $\alpha/5d$.
\end{lemma}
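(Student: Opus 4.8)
The plan is to exhibit an explicit event on the finitely many vertices of $B(o,1)$ that forces $o$ to be a singleton nodal domain, and then lower-bound its probability using the Gaussianity of $\Lambda_\lambda$. Concretely, I would ask for the event that $X_o > 0$, that every neighbor $v \sim o$ satisfies $X_v < 0$, and that moreover $|X_o| \ge \alpha/5d$ and $|X_v| \ge \alpha/5d$ for each $v \sim o$. On this event $o$ is trivially a singleton weak nodal domain (all $d$ neighbors have the opposite sign, and since nodal domains are about sign patterns on vertices this is exactly the singleton condition), and the modulus bound on $B(o,1)$ is built in. By automorphism invariance the marginal on $B(o,1)$ is a centered $(d+1)$-dimensional Gaussian vector $(X_o, X_{v_1}, \dots, X_{v_d})$ with $\E X_o^2 = 1$, exchangeable in the neighbor coordinates, and satisfying the linear constraint $\lambda X_o = \sum_i X_{v_i}$ almost surely.

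The key computation is to control this covariance structure well enough to get a quantitative lower bound on the probability of the orthant-type event above. First I would record the low-order moments of the Gaussian wave: from $\lambda X_o = \sum_{v \sim o} X_v$, squaring and using exchangeability gives $\lambda^2 = d\,\E X_{v_1}^2 + d(d-1)\,\E X_{v_1}X_{v_2}$, and pairing the eigenvector equation at $o$ with $X_{v_1}$ gives $\lambda\,\E X_o X_{v_1} = \E X_{v_1}^2 + (d-1)\E X_{v_1}X_{v_2}$; one also has $\E X_{v_1}^2 = 1$ is \emph{not} forced, but the standard normalization of the Gaussian wave on the tree does pin down these second moments (this is where I would invoke the known explicit two-point function of $\Lambda_\lambda$, or alternatively just use that $\E X_{v_1}^2 \le 1$ type a priori bounds coming from the spectral-measure representation). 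With the covariance matrix $\Sigma$ of $(X_o, X_{v_1}, \dots, X_{v_d})$ in hand, the probability that $X_o$ lies in an interval of width $\sim \alpha/d$ around a suitable positive value while all $X_{v_i}$ lie in a suitable negative range is bounded below by the Gaussian density on a box of side $\sim \alpha/d$ times the volume of that box; since the density is $\gtrsim (\det \Sigma)^{-1/2}$ times an exponential that is bounded below on a bounded box, and the box has volume $\sim (\alpha/d)^{d+1}$, one gets a bound of the shape $(\alpha/d)^{d+1}$ up to constants depending only on $\Sigma$, which matches the claimed $\GWConst = \alpha^d/(3^{d+2}d^{d+1})$ up to bookkeeping.

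A cleaner route that avoids computing $\det\Sigma$ is to condition: $X_o$ is a single standard-ish Gaussian, and conditionally on $X_o = x_0$ the neighbors $(X_{v_i})$ are jointly Gaussian with mean proportional to $x_0$ and with the one linear constraint $\sum_i X_{v_i} = \lambda x_0$. Choose $x_0$ in a small positive window where $\lambda x_0$ is comfortably negative of size $\sim \alpha x_0$; then the conditional mean of each $X_{v_i}$ is already negative of order $\alpha$, and I need the $d$-dimensional conditional Gaussian (which lives on the hyperplane $\sum_i y_i = \lambda x_0$, hence is genuinely $(d-1)$-dimensional) to put all coordinates in a negative range of width $\sim \alpha/d$. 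This is a bounded-dimensional Gaussian small-ball estimate and is handled by the same density-times-volume argument, now with the exponential factor controlled because the target region contains the conditional mean. Tracking the constants through the conditioning gives the product form: roughly a factor $\alpha/d$ from the $X_o$ window, and a factor $(\alpha/d)^{d-1}$ (or $(\alpha/d)^d$ being generous) from the neighbor event, assembling to the stated $\GWConst$.

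The main obstacle I anticipate is making the Gaussian small-ball estimate \emph{quantitative with an absolute constant}, i.e. not hiding a dependence on $d$ in a way that spoils the claimed $3^{d+2}d^{d+1}$ denominator: the neighbor covariance matrix could in principle be nearly degenerate (the constraint $\sum X_{v_i} = \lambda x_0$ already removes one direction, and the remaining conditional covariance needs a uniform-in-$\lambda$ lower bound on its smallest eigenvalue on the constraint hyperplane). Establishing such a spectral lower bound likely requires using the precise structure of the Gaussian wave's covariance on the tree (its correlations decay and are not perfectly aligned), rather than treating $\Sigma$ as a black box. The other mild nuisance is bookkeeping the factor of $5$ in the modulus bound $\alpha/5d$ and the choice of the window for $x_0$ so that everything fits simultaneously; I expect this to be routine once the non-degeneracy bound is in place.
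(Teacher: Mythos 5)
Your event is the right one and the target bound has the right shape, but the core quantitative step --- lower-bounding the Gaussian probability of that event --- is exactly the part you leave unresolved, and the route you propose for it does not go through. First, the unconditioned ``density times volume'' version is unavailable: the constraint $\lambda X_o=\sum_{i}X_{v_i}$ holds almost surely, so the covariance $\Sigma$ of $(X_o,X_{v_1},\dots,X_{v_d})$ is singular and there is no Lebesgue density on $\R^{d+1}$. Second, and more seriously, the non-degeneracy you flag as the ``main obstacle'' for the conditional route is genuinely false in part of the allowed parameter range. A direct computation from the relations $\E X_oX_{v_i}=\lambda/d$, $\E X_{v_i}^2=1$ (which, contrary to your aside, \emph{is} forced by automorphism invariance, since the tree is vertex-transitive) and $\E X_{v_i}X_{v_j}=(\lambda^2-d)/(d(d-1))$ shows that the conditional covariance of the neighbors given $X_o$, restricted to the hyperplane $\sum_i y_i=\lambda x_0$, is $\frac{d}{d-1}\bigl(1-\lambda^2/d^2\bigr)$ times the identity. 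This tends to $0$ as $\lambda\to-d$, a regime permitted by the hypotheses ($\alpha\le d$, $\lambda\le-\alpha$). So there is no uniform-in-$\lambda$ spectral lower bound, and the small-ball estimate as you set it up degenerates. (The lemma is still true there --- the neighbors then concentrate at their conditional mean $\lambda x_0/d<0$, which lands inside the target region --- but capturing this requires a separate concentration argument or a case split you have not supplied.)

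The paper avoids the issue entirely with a rotation-invariance argument that needs no lower bound on $\Sigma$. Write the restriction to $B(o,1)$ as $\Sigma^{1/2}g$ with $g$ i.i.d.\ standard Gaussian; then $\Sigma^{1/2}e_o$ and the $\Sigma^{1/2}e_{v_i}$ are unit vectors with $\langle\Sigma^{1/2}e_o,\Sigma^{1/2}e_{v_i}\rangle=\lambda/d\le-\alpha/d$. The event is that $\|g\|\ge1$ and that $g/\|g\|$ lies in a spherical cap of angular radius of order $\alpha/d$ around $\Sigma^{1/2}e_o$; on this event the triangle inequality alone forces $X_{v_i}\le-\alpha/5d$ for every $i$ and $X_o\ge\alpha/5d$, and the cap probability is bounded below by $(\alpha/3d)^d$ times an explicit surface-area ratio, independent of any conditioning or determinant. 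If you want to salvage your conditional approach, you would need to add the degenerate-covariance case explicitly; as written, the proposal has a genuine gap at its quantitative heart.
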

\begin{proof}
The proof proceeds by using the covariance of the Gaussian wave to pass to a Gaussian vector with i.i.d. entries, then showing that with probability at least $\GWConst$, this vector has a direction and norm that imply \pref{lem:gauswave}.

The distribution of $\Lambda_\lambda$ restricted to $B(o,1)$ is given by the multivariate normal distribution $N(\textbf{0},\Sigma)$ for a $(d+1)\times (d+1)$ covariance matrix $\Sigma$. The distribution according to $N(\textbf0,\Sigma)$ is the same as the distribution of $\Sigma^{1/2}g$, where $g$ is a length $(d+1)$ vector with i.i.d. Gaussian $N(0,1)$ entries.  Denote by $\{v_1,\ldots,v_d\}$ the neighbors of $o$ and denote by $e_v$ the elementary vector on $v$. Notice that 
$\langle\Sigma^{1/2}e_o,\Sigma^{1/2}e_{o}\rangle=\E(X_o^2)=1$, and by the eigenvector equation and automorphism invariance $\langle\Sigma^{1/2}e_o,\Sigma^{1/2}e_{v_i}\rangle=\E(X_oX_{v_i})=\lambda/d\leq-\alpha/d$.

Let $\tilde g:=g/\|g\|$. Next, we show that if $\tilde g$ is sufficiently close to $\Sigma^{1/2}e_{o}$, then it must have negative inner product with $\Sigma^{1/2} e_{v_i}$ for each $1\leq i\leq d$. 

If $\langle \tilde g,\Sigma^{1/2}e_o\rangle \geq 1-\frac{\alpha^2}{16d^2}$,

\begin{eqnarray*}
\langle \tilde g,\Sigma^{1/2}e_{v_i}\rangle&=&1-\frac12\|\tilde g-\Sigma^{1/2}e_{v_i}\|^2\\
&\leq& 1-\frac12\left(\|\Sigma^{1/2}e_o-\Sigma^{1/2}e_{v_i}\|-\|\tilde g-\Sigma^{1/2}e_o\|\right)^2\\
&\leq& 1-\left(\sqrt{1-\langle\Sigma^{1/2}e_o,\Sigma^{1/2}e_{v_i}\rangle}-\sqrt{1-\langle\Sigma^{1/2}e_o,\tilde g\rangle}\right)^2\\
&\leq&1-\left(\sqrt{1+\frac\alpha d}-\sqrt{\frac{\alpha^2}{16d^2}}\right)^2\\
&\leq&-\frac{\alpha}{d}-\frac{\alpha^2}{16d^2}+\frac{\alpha}{2d}\sqrt{1+\frac{\alpha}{d}}\\
&\leq &-\frac{\alpha}{5d}
\end{eqnarray*}
 for each $i$. The first inequality is the triangle inequality. The second is the parallelogram law. The last inequality is true as $\alpha/d\leq 1$.

The probability that $\|g\|\geq 1$ is at least the probability that the first coordinate of $g$ has modulus at least 1. As this coordinate is standard normal, this probability is at least 0.3. The probability that $\langle \tilde g,e_o\rangle\geq 1-\frac{\alpha^2}{16d^2}$ is the surface area of the spherical cap where this inequality is true divided by the surface area of the sphere. The surface area of the spherical cap is at least the volume of the $d$ dimensional sphere base of the spherical cap. The radius of the $d$-dimensional sphere is  \[\sqrt{1-\left(1-\frac{\alpha^2}{16d^2}\right)^2}=\sqrt{\frac{\alpha^2}{8d^2}-\frac{\alpha^4}{256d^4}}\geq \frac{\alpha}{3d},\] meaning that the probability that $\langle \tilde g,e_o\rangle \geq 1-\frac{\alpha^2}{16d^2}$ is at least
\[
\left.\left(\left(\frac{\alpha}{3d}\right)^d\cdot \frac{\pi^{d/2}}{\Gamma(\frac d2+1)}\right)\middle/\left(\frac{2\pi^{(d+1)/2}}{\Gamma(\frac d2+\frac12)}\right)\right.\geq \frac{\alpha^d}{3^{d}d^{d+1}\sqrt{\pi }}.\]

The probability that both $\langle g,e_o\rangle\geq 1-\frac{\alpha^2}{16d^2}$ and $\langle g,\Sigma^{1/2}e_{v_i}\rangle \leq -\alpha/2d$ for each $i$ is at least the probability that $\|g\|\geq 1$ and $\langle \tilde g,e_o\rangle\geq 1-\frac{\alpha^2}{16d^2}$. By rotational invariance of $g$ these are independent, so this probability is at least \[0.3 \cdot \frac{\alpha^d}{3^{d}d^{d+1}\sqrt{\pi }}\geq\frac{\alpha^d}{3^{d+2}d^{d+1}}.\]

\end{proof}

\begin{lemma} \label{lem:many-isolated-or-localized}
For any $d\geq 3$ $\delta>0$ and $0<\alpha\leq d$, there exists $N=N(d,\delta,\alpha)$ such that if $n>N$, then with probability at least $1-\delta$ with respect to $G(n,d)$, for any eigenvector $f$ with eigenvalue less than $-\alpha$ either
\begin{enumerate}
    \item $f$ has at least $\GWConst n/2$ singleton nodal domains, or
    \item There is a set of vertices $S\subset V$, $|S|\leq \delta n$ such that $\sum_{v\in S}f(v)^2\geq 1-\delta$.
\end{enumerate}
\end{lemma}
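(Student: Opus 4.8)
The plan is to combine the Gaussian-wave comparison of \pref{thm:gaussianwave} with the single-vertex estimate of \pref{lem:gauswave}, using a dichotomy based on the scale parameter $\sigma$ that appears in the weak convergence statement. The key observation is that for a random vertex $u$, whether $\{u\}$ is a singleton nodal domain is determined by the signs of $f$ on $B_G(u,1)$, i.e. by the first $d+1$ coordinates of the local vector; and being a singleton domain ``robustly'' (all entries in $B(o,1)$ of modulus at least $\alpha/5d$) is an event that is open (its indicator is continuous a.e.), so it is stable under small perturbations in L\'evy--Prokhorov distance. So I would fix $R=1$, let $\epsilon>0$ be a small constant to be chosen in terms of $\delta$, $\alpha$, $d$, and invoke \pref{thm:gaussianwave} to get that with probability $1-\epsilon$ over $G$, for every eigenvector $f$ with eigenvalue $\lambda\le-\alpha$ there is some $\sigma=\sigma(f)\in[0,1]$ such that $\nu_{G,f,1}$ is within $\epsilon$ in L\'evy--Prokhorov distance of the law of $\sigma\cdot\Lambda_\lambda$ restricted to $B(o,1)$.

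\medskip

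\textbf{Case analysis on $\sigma$.} First I would handle the case $\sigma$ bounded away from $0$, say $\sigma\ge\sigma_0$ for a threshold $\sigma_0=\sigma_0(\delta,\alpha,d)$ chosen below. Here \pref{lem:gauswave} says that the root of $\Lambda_\lambda$ is a singleton nodal domain with all $B(o,1)$-entries of modulus $\ge\alpha/5d$ with probability $\ge\GWConst$; scaling by $\sigma\ge\sigma_0$ only rescales the threshold to $\sigma_0\alpha/5d$ and preserves the sign pattern, so the same event has probability $\ge\GWConst$ under $\sigma\Lambda_\lambda$. The event ``the first $d+1$ coordinates have a common sign and all have modulus $\ge\sigma_0\alpha/5d$'' is (for $\epsilon<\sigma_0\alpha/10d$, say) contained in the $\epsilon$-fattening-minus-$\epsilon$ comparison, so transferring through the L\'evy--Prokhorov bound gives $\Pr_{u}[\{u\}\text{ is a singleton nodal domain of }f]\ge\GWConst-\epsilon\ge\GWConst/2$, which yields at least $\GWConst n/2$ singleton domains and puts us in alternative (1). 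In the complementary case $\sigma<\sigma_0$, the local $\ell_2$ mass around a typical vertex is small: $\E_u\big[\sum_{v\in B_G(u,1)}(\sqrt n f(v))^2\big]$ is close (within an error controlled by $\epsilon$ and the second moments of $\Lambda_\lambda$, which are bounded) to $(d+1)\sigma^2$, hence is at most $2(d+1)\sigma_0^2$ for $n$ large. Since $\E_u\big[\sum_{v\in B_G(u,1)}\cdot\big]=\frac{d+1}{n}\sum_v (\sqrt n f(v))^2=d+1$ would be the value if mass were spread out, the smallness of this expectation forces most of the $\ell_2$ mass of $f$ to sit on a small set: by Markov, the set $S$ of vertices $u$ with $f(u)^2\ge\tau/n$ has size at most $n/\tau\cdot(\text{small})$, and choosing $\sigma_0$ and $\tau$ appropriately makes $|S|\le\delta n$ while $\sum_{u\in S}f(u)^2\ge 1-\delta$ --- giving alternative (2). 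I would need to be slightly careful to pass from control of a random-vertex average to control of the deterministic sum $\sum_v f(v)^2 = 1$; the clean way is to note $\sum_u \mathbf{1}[f(u)^2< \tau/n]\cdot f(u)^2 < \tau$, so the complement set $S=\{u:f(u)^2\ge\tau/n\}$ carries mass $\ge 1-\tau$, and separately bound $|S|$ using the expectation estimate above.

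\medskip

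\textbf{Main obstacle.} The subtle point is that \pref{thm:gaussianwave} controls only the first $C_1=d+1$ coordinates through a L\'evy--Prokhorov bound, which is a weak-convergence statement and does \emph{not} a priori control second moments (mass can escape to infinity). To bound $\E_u[\sum_{v\in B(u,1)}nf(v)^2]$ in terms of $\sigma^2$ I would need a truncation argument: split the contribution at a large cutoff $T$; the part below $T^2$ is controlled by L\'evy--Prokhorov convergence against the (bounded, continuous up to a null set) truncated function, and the part above $T^2$ must be bounded separately --- here I would use the $\ell_\infty$ delocalization bound \pref{thm:eigvec-deloc}, which says $\sqrt n|f(v)|\le\log^{\HYConst}n$ deterministically (w.h.p.), so each coordinate is at most polylogarithmic, letting me take $T=\log^{\HYConst}n$ and making the tail contribution vanish after rescaling, OR more simply just run the whole argument with $f$ replaced by its truncation at level $T/\sqrt n$ and absorb the error. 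Either way this is routine but is the one place where care is required; everything else is a matter of choosing $\epsilon,\sigma_0,\tau$ as explicit functions of $\delta,\alpha,d$ and taking $n$ large enough (and intersecting the two high-probability events from \pref{thm:gaussianwave} and \pref{thm:eigvec-deloc}).
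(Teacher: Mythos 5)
Your skeleton is the paper's: apply \pref{thm:gaussianwave} with $R=1$, split on the scale $\sigma$, and in the large-$\sigma$ case transfer the event of \pref{lem:gauswave} through the L\'evy--Prokhorov bound (your condition $\epsilon<\sigma_0\alpha/10d$ is exactly the paper's threshold $\sigma\ge 10\epsilon d\alpha^{-1}$); that branch is correct. The gap is in the small-$\sigma$ branch, specifically in how you bound $|S|$. The quantity $\E_u\bigl[\sum_{v\in B_G(u,1)}(\sqrt n f(v))^2\bigr]$ equals $d+1$ identically for every unit vector $f$ (each vertex is counted $d+1$ times), so it is never ``close to $(d+1)\sigma^2$'' when $\sigma$ is small --- the discrepancy between $d+1$ and $(d+1)\sigma^2$ \emph{is} the localized mass you are trying to detect, and no weak-convergence statement can make it go away. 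You correctly flag this as the main obstacle, but the proposed repair fails: truncating at $T=\log^{\HYConst}n$ is vacuous (by the $\ell_\infty$ bound every rescaled coordinate is already below $T$, so the truncated second moment is still $d+1$), and the L\'evy--Prokhorov comparison of integrals of a function bounded by $T^2$ carries an error of order $\epsilon T^2=\epsilon\log^{2\HYConst}n$, which diverges as $n\to\infty$ because the $\epsilon$ supplied by \pref{thm:gaussianwave} is a fixed constant with no rate. So the second-moment route does not close as written.

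The repair is to avoid second moments altogether, which is what the paper does. Project $\nu_{G,f,1}$ onto the root coordinate to get $\tilde d(\nu_0,N(0,\sigma^2))\le\epsilon$, set $z=\sigma\sqrt{2\log(1/\epsilon)}$, and apply the L\'evy--Prokhorov inequality to the \emph{tail probability}: $\Pr_{x\sim\nu_0}(|x|>z+\epsilon)\le \Pr_{x\sim N(0,\sigma^2)}(|x|>z)+\epsilon\le 3\epsilon$, which bounds $|S|\le 3\epsilon n$ for $S=\{u:nf(u)^2>(z+\epsilon)^2\}$ with no moment transfer needed. Your ``clean way'' observation already supplies the other half: the vertices outside $S$ contribute at most $(z+\epsilon)^2=O(d^2\alpha^{-2}\epsilon^2\log(1/\epsilon))$ to $\sum_u f(u)^2=1$ when $\sigma<10\epsilon d\alpha^{-1}$, so $S$ carries mass $1-O(\epsilon)$. (Alternatively, a truncation at a \emph{constant} level $T=T(\delta)$, rather than $\log^{\HYConst}n$, combined with Markov against the exact normalization $\sum_u f(u)^2=1$, can also be made to work --- but then the $\ell_\infty$ bound plays no role.) In short: right dichotomy, correct first branch, correct mass lower bound, but the bound on $|S|$ rests on a false identity and an unconvergent repair, and should instead come directly from the L\'evy--Prokhorov control of tail probabilities.
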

\begin{proof}

 Define $\mu=\mu(d,\lambda,\sigma)$ to be the distribution of the Gaussian wave $\sigma\cdot \Lambda_\lambda$ restricted to $B(o,1)$. Assume that $\tilde d(\mu,\nu_{G,f,1})\leq\epsilon$, for $\epsilon\leq \GWConst/2$ to be fixed later. We consider two cases depending on the relationship between $\sigma$ and $\epsilon$. If $\sigma$ is much larger than $\epsilon$, then the eigenvector is delocalized, and we can use \pref{lem:gauswave}. Otherwise, the eigenvector is localized.

 First, assume $\sigma\geq 10\epsilon d\alpha^{-1}$. Define $A$ to be the set of vectors $\overline x:=(x_o,x_{v_1},\ldots,x_{v_d})\in\R^{d+1}$ such that
 \begin{enumerate}
     \item 
     $\min\{|x_o|,|x_{v_1}|,\ldots,|x_{v_d}|\}\geq \frac{\sigma\alpha}{5d}$ and
     \item
     $x_o\cdot x_{v_i}<0$ for each $1\leq i\leq d$.
 \end{enumerate}

By \pref{lem:gauswave}, $\mu(A)\geq \GWConst$. By the definition of $A$, a given vector $\overline x\in A$ is such that all entries are of modulus at least $\frac{\sigma\alpha}{5d}$. Moreover, by the assumption on $\sigma$, we have $\epsilon\leq \frac{\sigma\alpha}{10d}$. Therefore, for a vector $\overline y:=(y_o,y_{v_1},\ldots,y_{v_d})$ such that $\|\overline x-\overline y\|\leq \epsilon$, the entries of $\overline y$ are of the same sign as the entries of $\overline x$. Therefore, if $x_o\cdot x_{v_i}<0$ for each $1\leq i\leq d$, then $y_o\cdot y_{v_i}<0$ for each $1\leq i\leq d$, meaning that if $B(o,1)$ is colored as per $\overline y$, then $\{o\}$ is a singleton nodal domain.

As $\tilde d(\mu,\nu_{G,f,1})\leq \epsilon$, we have $\nu_{G,f,1}(A_\epsilon)\geq \mu(A)-\epsilon\geq \GWConst/2$. By the previous paragraph, all vectors in $A_\epsilon$ correspond to singleton nodal domains, so there are at least $\GWConst n/2$ singleton domains of $f$ in $G$.

Now assume $\sigma<10\epsilon d\alpha^{-1}$. In this case, we will show that because $\nu_{G,f,1}$ is close to a Gaussian with low variance, the distribution of entries of $f$ must be concentrated around 0.

Denote by $\mu_0$ the distribution of the value on $o$ in $\mu$, and $\nu_0:=\nu_{G,f,0}$. Note that $\mu_0$ is the distribution $N(0,\sigma^2)$. The Euclidean distance between two points can only decrease when projecting onto a single coordinate, therefore the L\'evy Prokhorov distance can only decrease as well. This means that as $\tilde d(\mu,\nu_{G,f,1})\leq\epsilon$, then $\tilde d(\mu_0,\nu_0)\leq\epsilon$. Therefore for each $z\geq 0$,
\begin{equation*}\label{eq:prokpdf}
\Pr_{x\sim \nu_0}(x\in [-z-\epsilon,z+\epsilon])\geq \Pr_{x\sim \mu_0}(x\in[-z,z])-\epsilon.
\end{equation*}

Fix $z:=\sigma \sqrt{2\log \frac1\epsilon}$ and observe that by Gaussian tail bounds
\begin{equation}\label{eq:pro}
 \Pr_{x\sim \mu_0}(x\notin[-z,z])\leq 2\epsilon.
\end{equation} 
Also, by examining the endpoints of the interval, we have
\[
    \E_{x\sim\nu_{0}}\left(\textbf{1}\bigg[x\in [-z-\epsilon,z+\epsilon]\bigg]\cdot x^2\right)\leq \left(\sigma\sqrt{2\log\frac1\epsilon}+\epsilon\right)^2.
\]
By assumption $\sigma<10\epsilon d\alpha^{-1}$. Therefore 
\[
\left(\sigma\sqrt{2\log\frac1\epsilon}+\epsilon\right)^2\leq \left(10\epsilon d\alpha^{-1}\sqrt{2\log\frac1\epsilon}+\epsilon\right)^2=\left(1+10d\alpha^{-1}\sqrt{2\log\frac1\epsilon}\right)^2\epsilon^2\leq 250d^2\alpha^{-2}\epsilon^2\log\frac1\epsilon.
\]
As $\frac{1}{\epsilon}>\log \frac1\epsilon$ and $\E_{x\sim\nu_{0}}(x^2)=1$, this means that
\[\E_{x\sim\nu_{0}}\left(\textbf{1}\bigg[x\notin [-z-\epsilon,z+\epsilon]\bigg]\cdot x^2\right)\geq 1-250d^2\alpha^{-2}\epsilon.
\]
Combining this with \eqref{eq:pro} and the definition of $\nu_0$, this means that if $S=\{u\in V|f(u)^2\geq 2\sigma^2\log\frac1\epsilon\}$, then $|S|\leq 2\epsilon n$, and \[\sum_{u\in S}f(u)^2=\frac 1n\sum_{u\in S}nf(u)^2=\E_{x\sim\nu_{0}}\left(\textbf{1}\bigg[x\notin [-z-\epsilon,z+\epsilon]\bigg]\cdot x^2\right)\geq 1-250d^2\alpha^{-2}\epsilon.\]

It is therefore sufficient to choose $N$ as per \pref{thm:gaussianwave} for

\[
\epsilon<\min\left\{\frac{\GWConst}2,\frac{\alpha^{2}}{250d^2}\delta\right\}.
\]

\end{proof}

\section{Spectral radius bounds}\label{sec:specrad}
The main result of this section is \pref{lem:her-deg-specrad}, where we prove bounds on the spectral radius of high-girth graphs with bounded maximum degree and hereditary degree (defined below) approximately equal to $2$.
\begin{definition}
    The \emph{hereditary degree} of a graph $H$ is defined as:
    \[
        \max_{H'\subseteq H}\AvgDeg(H')
    \]
    where $\AvgDeg(H')=2|E(H')|/|V(H')|$.
\end{definition}

\begin{definition}
    Given a collection of edges $F$, we will use $v(F)$ to denote the number of vertices adjacent to $F$, and $c(F)$ to denote the number of connected components formed by $F$.
\end{definition}

\begin{definition}
    Given a graph $H$ and a collection of edges $F\subseteq E(H)$, we use $1_F$ to denote its indicator vector in $\R^{E(H)}$.  The \emph{spanning forest polytope} of $H$ is defined to be the convex hull of $\{1_F:F\text{ forest}\}$.
\end{definition}

We will also need the following two ingredients.
\begin{lemma}\cite{kesten1959symmetric}   \label{lem:forest-bound}
    If $T$ is a forest with maximum degree bounded by $\Delta$, then $\lambda_{\max}(A_T)\le2\sqrt{\Delta-1}$.
\end{lemma}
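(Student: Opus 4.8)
The plan is to exhibit an explicit diagonal similarity of $A_T$ all of whose absolute row sums are at most $2\sqrt{\Delta-1}$; since the spectral radius of a matrix is bounded above by any induced operator norm and is invariant under similarity, this gives $\lambda_{\max}(A_T)\le 2\sqrt{\Delta-1}$ at once. First I would reduce to the case that $T$ is a single tree, since the adjacency matrix of a forest is block diagonal and its spectrum is the union of the spectra of the components, and I would assume $\Delta\ge 2$ (for $\Delta\le 1$ the quantity $2\sqrt{\Delta-1}$ is to be read as the trivial bound, and $T$ is then a matching or edgeless).

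Next, root $T$ at an arbitrary vertex $r$, let $\mathrm{dep}(v)$ denote the distance from $v$ to $r$, and let $D$ be the positive diagonal matrix with $D_{vv}=(\Delta-1)^{-\mathrm{dep}(v)/2}$. Because $D$ is invertible, $M:=D^{-1}A_T D$ is similar to $A_T$, hence has the same eigenvalues, and in particular the same spectral radius $\rho$; since $A_T$ is symmetric, $\lambda_{\max}(A_T)\le\rho$. For an edge $\{v,u\}\in E(T)$ one has $M_{vu}=D_{vv}^{-1}(A_T)_{vu}D_{uu}=(\Delta-1)^{(\mathrm{dep}(v)-\mathrm{dep}(u))/2}$, so in the row of $M$ indexed by $v$ each child $u$ of $v$ contributes $(\Delta-1)^{-1/2}$ and the parent of $v$ (if it exists) contributes $(\Delta-1)^{1/2}$.

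Now bound the absolute row sums of $M$. A non-root vertex $v$ has exactly one parent and at most $\Delta-1$ children, so its row sum is at most $(\Delta-1)\cdot(\Delta-1)^{-1/2}+(\Delta-1)^{1/2}=2\sqrt{\Delta-1}$. The root $r$ has no parent and at most $\Delta$ children, so its row sum is at most $\Delta(\Delta-1)^{-1/2}$, which is $\le 2\sqrt{\Delta-1}$ exactly because $\Delta\le 2(\Delta-1)$ for $\Delta\ge 2$. Hence $\|M\|_{\infty\to\infty}=\max_v\sum_u|M_{vu}|\le 2\sqrt{\Delta-1}$, and therefore $\lambda_{\max}(A_T)\le\rho=\rho(M)\le\|M\|_{\infty\to\infty}\le 2\sqrt{\Delta-1}$.

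There is no real obstacle here; the only thing to get right is the exponent $\tfrac12$ in the weights, which is precisely what equalizes the (single) parent contribution with the (up to $\Delta-1$) children contributions. I would add one remark: this weighting argument is the combinatorial core of Kesten's computation that the infinite $\Delta$-regular tree $\mathbb{T}_\Delta$ has adjacency operator norm $2\sqrt{\Delta-1}$, and one could alternatively embed $T$ (which has maximum degree $\le\Delta$) as a subgraph of $\mathbb{T}_\Delta$, extend the Perron eigenvector of $A_T$ by zero, and compare Rayleigh quotients to conclude $\lambda_{\max}(A_T)\le\|A_{\mathbb{T}_\Delta}\|=2\sqrt{\Delta-1}$; but that route merely defers the estimate to bounding the norm of the infinite tree, which is proved by the same device, so the direct argument above is cleaner and self-contained.
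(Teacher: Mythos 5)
Your argument is correct, and it fills a gap the paper leaves open: the paper states this lemma as a citation to Kesten and gives no proof, so there is nothing to compare line by line. The diagonal-similarity (weighted row sum) argument you give is the standard self-contained proof: conjugating by $D_{vv}=(\Delta-1)^{-\mathrm{dep}(v)/2}$ makes the parent entry $(\Delta-1)^{1/2}$ and each child entry $(\Delta-1)^{-1/2}$, the row sums are all at most $2\sqrt{\Delta-1}$, and $\lambda_{\max}(A_T)\le\rho(M)\le\|M\|_{\infty\to\infty}$ does the rest; the reduction to a single component and the root's row sum check $\Delta\le 2(\Delta-1)$ are both handled correctly. Your alternative route (embed $T$ into the infinite $\Delta$-regular tree $T_\Delta$, extend the Perron vector by zero, and compare Rayleigh quotients against $\|A_{T_\Delta}\|=2\sqrt{\Delta-1}$) is what the citation to Kesten implicitly invokes, and as you say it rests on the same weighting device. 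One caveat worth keeping explicit rather than parenthetical: as literally stated the bound fails for $\Delta=1$ when $T$ contains an edge ($\lambda_{\max}=1>0=2\sqrt{\Delta-1}$), so the lemma should be read with $\Delta\ge 2$; this is harmless here since the paper only applies it to spanning forests of a graph $H$ of maximum degree $d-1\ge 2$.
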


The following fact about the spanning forest polytope is a consequence of \cite[Theorem 13.21]{korte2011combinatorial}.
\begin{lemma}   \label{lem:sp-forest-polytope}
    The spanning forest polytope of a graph $H$ is equal to the feasible region of the following linear program:
    \begin{align*}
        y &\in\R^{E(H)} \\
        y&\ge 0\\
        \sum_{e\in F} y_e &\le v(F)-c(F) &\forall F\subseteq E(H).
    \end{align*}
\end{lemma}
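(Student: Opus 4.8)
The plan is to recognize the spanning forest polytope of $H$ as the convex hull of indicator vectors of independent sets of the \emph{graphic matroid} $M(H)$, and then invoke Edmonds' matroid polytope theorem, which is precisely \cite[Theorem 13.21]{korte2011combinatorial}. Recall that $M(H)$ has ground set $E(H)$, and a set of edges is independent in $M(H)$ if and only if it is acyclic, i.e.\ a forest; hence $\{1_F : F \text{ forest}\}$ is exactly the set of indicator vectors of independent sets of $M(H)$, and the spanning forest polytope is its convex hull. Edmonds' theorem states that for any matroid with rank function $r$, this convex hull equals $\{y \ge 0 : \sum_{e\in S} y_e \le r(S) \text{ for all } S\}$, so the whole content of the lemma reduces to checking that the rank function of $M(H)$ is $r(F) = v(F) - c(F)$.

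To verify this, fix $F \subseteq E(H)$ and consider the subgraph $(V(F), F)$ spanned by $F$, which has $v(F)$ vertices and $c(F)$ connected components, each containing at least one edge. By definition $r(F)$ is the maximum size of a forest contained in $F$, and such a maximum forest is obtained by taking a spanning tree of each of the $c(F)$ components: a component with $k$ vertices contributes $k-1$ tree edges, and summing over all components gives $v(F) - c(F)$ edges in total, so $r(F) = v(F) - c(F)$. Isolated vertices of $H$ play no role here, which is exactly why the bound is phrased with $v(F)$ and $c(F)$ — the numbers of vertices and components \emph{touched} by $F$ — rather than $|V(H)|$ and the total number of components of $(V(H),F)$; the two conventions give the same value of $r(F)$.

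Substituting $r(F) = v(F) - c(F)$ into Edmonds' theorem yields verbatim the linear program in the statement, which finishes the argument. I do not anticipate any real obstacle: all of the substance is carried by the cited matroid-polytope theorem, and this lemma is simply its specialization to graphic matroids with the rank function written out explicitly; the only thing to be careful with is the small amount of bookkeeping in the rank computation, namely counting the vertices and components contributed by $F$ and confirming that non-incident vertices are correctly ignored.
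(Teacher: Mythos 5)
Your proposal is correct and is exactly the route the paper intends: the paper gives no written proof, simply noting that the lemma "is a consequence of \cite[Theorem 13.21]{korte2011combinatorial}" (Edmonds' matroid polytope theorem), and your argument fills in the only missing detail, namely that the graphic matroid's rank function is $r(F)=v(F)-c(F)$. The rank computation and the remark about isolated vertices are both accurate, so nothing further is needed.
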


\begin{lemma}   \label{lem:her-deg-specrad}
    Let $H$ be a graph with hereditary degree $2(1+\delta)$, maximum degree $\Delta$, and girth $g$.  Then:
    \[
        \lambda_{\max}(A_H) \le 2\frac{1+\delta}{1-\frac{1}{g}}\sqrt{\Delta-1}.
    \]
\end{lemma}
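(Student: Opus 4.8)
The plan is to realize $A_H$ as a convex combination of adjacency matrices of forests, each of which is a subgraph of $H$, and then invoke \pref{lem:forest-bound}. For an edge $e=\{u,v\}$ let $A_{\{e\}}$ denote the matrix on $\R^{V(H)}$ with a $1$ in the $(u,v)$ and $(v,u)$ entries and zeros elsewhere, so that $A_H=\sum_{e\in E(H)}A_{\{e\}}$. Suppose that $\frac1\beta\mathbf 1_{E(H)}$ lies in the spanning forest polytope of $H$ for some $\beta>0$; then we may write $\frac1\beta\mathbf 1_{E(H)}=\sum_i p_i 1_{F_i}$ with $p_i\ge 0$, $\sum_i p_i=1$, and each $F_i$ a forest. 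Expanding edge-by-edge gives $\frac1\beta A_H=\sum_{e\in E(H)}\frac1\beta A_{\{e\}}=\sum_i p_i\sum_{e\in F_i}A_{\{e\}}=\sum_i p_i A_{F_i}$. Since $\lambda_{\max}$ is subadditive and positively homogeneous on symmetric matrices, and each $F_i$ is a forest of maximum degree at most $\Delta$ (being a subgraph of $H$), \pref{lem:forest-bound} yields $\frac1\beta\lambda_{\max}(A_H)\le\sum_i p_i\lambda_{\max}(A_{F_i})\le 2\sqrt{\Delta-1}$, i.e.\ $\lambda_{\max}(A_H)\le 2\beta\sqrt{\Delta-1}$.

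It therefore suffices to prove that $\beta:=\frac{1+\delta}{1-1/g}$ (which satisfies $\beta\ge1$) works, and by \pref{lem:sp-forest-polytope} this amounts to checking $|F|\le\beta\bigl(v(F)-c(F)\bigr)$ for every $F\subseteq E(H)$, the nonnegativity constraint being immediate. Decompose $F$ into its connected components $C_1,\dots,C_k$, and let $v_j$ and $e_j$ be the numbers of vertices and edges of $C_j$; then $|F|=\sum_j e_j$ and $v(F)-c(F)=\sum_j(v_j-1)$, so it is enough to show $e_j\le\beta(v_j-1)$ for each $j$. If $C_j$ is a tree this reads $v_j-1\le\beta(v_j-1)$, which holds since $\beta\ge1$. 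If $C_j$ contains a cycle, then $v_j\ge g$ because $H$ has girth $g$, while the hereditary degree hypothesis applied to the subgraph $C_j$ gives $\AvgDeg(C_j)=2e_j/v_j\le 2(1+\delta)$, i.e.\ $e_j\le(1+\delta)v_j$. Combining, and using that $t\mapsto t/(t-1)$ is decreasing together with $v_j\ge g$,
\[
e_j\le(1+\delta)v_j=(1+\delta)\frac{v_j}{v_j-1}(v_j-1)\le(1+\delta)\frac{g}{g-1}(v_j-1)=\beta(v_j-1).
\]
Summing over $j$ verifies the polytope membership, and substituting $\beta=\frac{1+\delta}{1-1/g}$ into the bound of the previous paragraph gives $\lambda_{\max}(A_H)\le\frac{2(1+\delta)}{1-1/g}\sqrt{\Delta-1}$.

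The only conceptual step is passing through the spanning forest polytope in order to ``average'' $A_H$ over forests and thereby reduce to Kesten's bound; everything else is the two short componentwise inequalities above, one invoking the girth and the other the hereditary degree. The point requiring the most care is the edge-by-edge identity $\frac1\beta A_H=\sum_i p_i A_{F_i}$ combined with subadditivity of $\lambda_{\max}$, as this is precisely where the forest structure of each $F_i$ is exploited; I do not anticipate a genuine obstacle.
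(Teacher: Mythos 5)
Your proposal is correct and follows essentially the same route as the paper: both place the uniform vector $\frac{1-1/g}{1+\delta}\mathbf 1_{E(H)}$ in the spanning forest polytope via \pref{lem:sp-forest-polytope}, verify the constraints component-by-component using the girth for tree components and the hereditary degree for cyclic ones, and then reduce to Kesten's bound \pref{lem:forest-bound} through the resulting convex combination of forests. The only (immaterial) difference is that you conclude via subadditivity of $\lambda_{\max}$ applied to $\frac1\beta A_H=\sum_i p_iA_{F_i}$, whereas the paper fixes a test vector $f$ and picks a forest achieving at least the average of $f^{\top}A_{\bT}f$; these are the same argument.
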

\begin{proof}
    Since $A_H$ is a symmetric matrix with nonnegative entries,
    \[
        \lambda_{\max}(A_H) = \max_{f\in\R^{V(H)}\setminus\{0\}} \frac{f^{\top}A_Hf}{\|f\|^2}.
    \]
    We will bound $f^{\top}A_Hf$ for any $f$.  Observe that:
    \[
        f^{\top}A_H f = \sum_{\{u,v\}\in E(H)} f_u f_v.
    \]
    We will prove that there is a spanning forest $T$ for which:
    \[
        \frac{1-\frac{1}{g}}{1+\delta} f^{\top}A_H f \le f^{\top}A_T f   \numberthis\label{eq:heavy-tree}
    \]
    which by \pref{lem:forest-bound} is bounded by $2\sqrt{\Delta-1}$ hence implying
    \[
        f^{\top}A_H f \le 2\frac{1+\delta}{1-\frac{1}{g}}\sqrt{\Delta-1}.
    \]
    To prove \pref{eq:heavy-tree} we exhibit a distribution $\calD$ on spanning forests such that:
    \[
        \E_{\bT\sim\calD}\left[f^{\top}A_{\bT}f\right] = \frac{1-\frac{1}{g}}{1+\delta} f^{\top}A_H f.
    \]
    Let $y\in\R^{E(H)}$ be the vector with $\frac{1-\frac{1}{g}}{1+\delta}$ in every entry.  We claim that $y$ is inside the spanning forest polytope of $H$.  To verify this, it suffices to check if $y$ satisfies the linear constraints given by the linear program description of the polytope from \pref{lem:sp-forest-polytope}.  By construction, each $y_e\ge 0$.

    For any $F\subseteq E(H)$, write it as $F_1\cup \dots \cup F_{c(F)}$ where each $F_i$ is a connected component given by $F$.  Since the girth of $H$ is at least $g$, for any $|F_i|<g$ we know $F_i$ forms a tree and hence $|F_i|=v(F_i)-1$.  For the remaining components, we know $|F_i|\le v(F_i)(1+\delta)$  by our bound on the hereditary average degree.  Now:
    \begin{align*}
        \sum_{e\in F} y_e &= \sum_{i=1}^{c(F)} \sum_{e\in F_i} y_e \\
        &= \sum_{i=1}^{c(F)} \frac{1-\frac{1}{g}}{1+\delta}|F_i| \\
        &= \sum_{i\in[c(F)]:|F_i|<g} \frac{1-\frac{1}{g}}{1+\delta}|F_i| + \sum_{i\in[c(F)]:|F_i|\ge g} \frac{1-\frac{1}{g}}{1+\delta}|F_i|\\
        &\le \sum_{i\in[c(F)]:|F_i|<g} (v(F_i)-1) + \sum_{i\in[c(F)]:|F_i|\ge g} \left(1-\frac{1}{g}\right)v(F_i)\\
        &\le \sum_{i=1}^{c(F)} (v(F_i)-1)\\
        &= v(F) - c(F).
    \end{align*}
    The second to last inequality follows from the fact that for a graph of girth $g$, a subgraph with at least $g$ edges has at least $g$ vertices. Since $y$ is in the spanning forest polytope of $H$ it must be expressible as a convex combination $p_1T_1+\dots+p_sT_s$ of indicator vectors of spanning forests in $H$.  Let $\calD$ be the distribution given by choosing spanning forest $T_i$ with probability $p_i$.  Notice that for $\bT\sim\calD$ the probability that any given edge $e$ is chosen is $\frac{1-\frac{1}{g}}{1+\delta}$.  Now:
    \begin{align*}
        \E_{\bT\sim\calD}\left[f^{\top}A_{\bT}f\right] &= \E_{\bT\sim\calD}\left[\sum_{\{u,v\} \in E(H)}\Ind[e\in\bT]f_uf_v\right] \\
        &= \sum_{\{u,v\}\in E(H)} f_u f_v \Pr[e\in\bT] \\
        &= \frac{1-\frac{1}{g}}{1+\delta}\sum_{\{u,v\}\in E(H)} f_u f_v \\
        &= \frac{1-\frac{1}{g}}{1+\delta} f^{\top}A_H f,
    \end{align*}
    which completes the proof.
\end{proof}

\section{$\ell_2$-localization implies many nodal domains}\label{sec:localized}
In this section $G$ is a $d$-regular graph and $f$ is a vector in $\R^{V(G)}$.  We prove that under some suitable assumptions on $G$ and $f$, it is not possible for $f$ to simultaneously be localized and have few nodal domains. 
Next, we verify that all of these conditions are simultaneously satisfied by random graphs and eigenvectors corresponding to sufficiently negative eigenvalues with high probability.

The conditions we impose on $G$ are:
\begin{enumerate}[wide, labelwidth=!, labelindent=0pt] 
    \item [{\bf Almost high-girth:}] 
    \mylabel{property:small-excess}{``almost-high girth''} There is $F\subseteq E(G)$ such that $|F|\le O(n^{1-c})$ and the girth of $G\setminus F$ is at least $c\log_{d-1}n$ for some absolute constant $c>0$.
    \item [{\bf Lossless edge expansion:}] \mylabel{property:lossless-edge-expansion}{``lossless edge expansion''} $\VExp_{\eps}(G) \ge d-2-\delta$ for some constants $\eps > 0$ and $0<\delta<d-2$.
\end{enumerate}

The conditions we impose on $f$ are:
\begin{enumerate}[wide, labelwidth=!, labelindent=0pt] 
    \item [{\bf $\ell_2$-localization:}] \mylabel{property:localization}{``$\ell_2$-localization''} There is a set $S\subseteq V(G)$ of size $\eps n$ such that $\|f_S\|^2\ge (1-\eta)\|f\|^2$ for some small constant $\eta > 0$ such that $4d\sqrt{\eta}<\delta\sqrt{d-2}$.
    \item [{\bf $\ell_{\infty}$-delocalization:}] \mylabel{property:eigvec-deloc}{{``$\ell_{\infty}$-delocalization''}} $\|f\|_{\infty}\le \displaystyle\frac{\log^{C}n}{\sqrt{n}}\|f\|$ for some constant $C$.
    \item [{\bf High energy:}] \mylabel{property:neg-qform}{``high energy''} $f^{\top}A_Gf = \lambda\|f\|^2$ for $\lambda < -2(1+2\delta)\sqrt{d-2}$.

\end{enumerate}
We note that the labels for the conditions on $G$ and $f$ are not definitions of those properties, but rather for readability in back-referencing.

The key result of this section is the following. We emphasize that all nodal domains considered are weak nodal domains of $f$ defined with respect to the graph $G$, and not its subgraphs.
\begin{lemma}   \label{lem:many-nodal-domains-if-localized} If $G$ and $f$ satisfy the above conditions then
    $f$ must have $\Omega\left(\frac{n}{\log^{2C+1}n}\right)$ singleton nodal domains.
\end{lemma}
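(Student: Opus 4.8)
The goal is to derive a contradiction from the assumption that $f$ has few singleton nodal domains (say, $o(n/\log^{2C+1} n)$ of them) together with the five stated conditions. The strategy, as hinted in the proof outline, is to pass from $G$ to an edge-subgraph $H\subseteq G[S]$ with $\Delta(H)\le d-1$ such that $f_S^\top A_H f_S$ is still essentially as negative as $\lambda\|f\|^2$, and then contradict the spectral radius bound of \pref{lem:her-deg-specrad}. To run this, one needs $H$ to have girth $\approx c\log_{d-1} n$ (from \ref{property:small-excess}) and hereditary average degree close to $2$ — this is where \ref{property:lossless-edge-expansion} enters, since lossless edge expansion $\VExp_\eps(G)\ge d-2-\delta$ is essentially the statement that every small vertex set spans few edges, i.e. has average degree at most $2+O(\delta)$; restricting to $S$ with $|S|=\eps n$ and then to any subset preserves this.

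\textbf{Constructing $H$.} First delete the bad edge set $F$ from \ref{property:small-excess}, which costs almost nothing in the quadratic form since $|F|=O(n^{1-c})$ and $\|f\|_\infty\le \log^C n/\sqrt n\,\|f\|$ gives $|f_S^\top A_F f_S|\le 2|F|\|f\|_\infty^2 = O(n^{-c}\log^{2C}n)\|f\|^2 = o(\|f\|^2)$. Next I would handle vertices of degree $d$ inside $G[S]\setminus F$: the high-energy/localization structure forces most of the $\ell_2$-mass to sit on a set where vertices behave treelike. Concretely, for a vertex $u$ that is \emph{not} a singleton nodal domain, at least one neighbor has the same sign as $u$; the idea is to orient, at each non-singleton vertex $u\in S$, toward a same-sign neighbor $v$ with $|f(v)|$ not much smaller than $|f(u)|$ (using the eigenvector equation $\lambda f(u)=\sum_{w\sim u}f(w)$ and negativity of $\lambda$ to guarantee such a $v$ exists with $|f(v)|$ comparable to $|f(u)|$). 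Deleting a carefully chosen small set of edges — essentially one "backward" edge per cycle and enough edges to cut degrees down to $d-1$ — produces $H$ of maximum degree $d-1$, girth $\ge c\log_{d-1}n$, while the deleted edges $uv$ all satisfy $f(u)f(v)\le 0$ or involve vertices of tiny $|f|$-value, so $\sum_{\text{deleted }uv}f(u)f(v)$ is nonnegative up to a $o(\|f\|^2)$ error. Hence $f_S^\top A_H f_S \ge f^\top A_G f - o(\|f\|^2) - (\text{mass lost off }S) \ge (\lambda - O(\sqrt\eta) - o(1))\|f\|^2$, where the $O(\sqrt\eta)$ term controls edges with one endpoint outside $S$ via Cauchy–Schwarz and $\|f_{\ol S}\|^2\le \eta\|f\|^2$; the condition $4d\sqrt\eta<\delta\sqrt{d-2}$ is exactly calibrated for this.

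\textbf{Closing the argument.} Now $H\subseteq G[S]$ has $|V(H)|\le \eps n$, so by \ref{property:lossless-edge-expansion} every subgraph $H'\subseteq H$ has $|E(H')|\le$ (number of edges inside a set of size $\le\eps n$) and a short computation from $\VExp_\eps(G)\ge d-2-\delta$ gives $\AvgDeg(H')\le 2(1+O(\delta))$, so the hereditary degree of $H$ is $2(1+O(\delta))$. Applying \pref{lem:her-deg-specrad} with $\Delta=d-1$, girth $g=c\log_{d-1}n\to\infty$, and $\delta$ replaced by $O(\delta)$ yields $\lambda_{\max}(A_H)\le 2(1+O(\delta))(1+o(1))\sqrt{d-2}$. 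But $A_H$ is symmetric, so $|f_S^\top A_H f_S|\le \lambda_{\max}(A_H)\|f_S\|^2 \le 2(1+O(\delta))(1+o(1))\sqrt{d-2}\,\|f\|^2$, while the previous paragraph gave $f_S^\top A_H f_S \le (\lambda+o(1))\|f\|^2 < -2(1+2\delta)\sqrt{d-2}\,\|f\|^2 + o(1)$ by \ref{property:neg-qform}. For $\delta$ small enough this is a contradiction. Tracking the number of edges deleted to achieve $\Delta(H)\le d-1$ and to kill cycles — roughly one per vertex whose $|f|$-value is at least $\|f\|/(\mathrm{poly}\log n)$ times the max, of which there are $O(n/\log^{2C}n)$ by \ref{property:eigvec-deloc} — together with the assumed $o(n/\log^{2C+1}n)$ singleton domains, is what forces the bound; hence $f$ must in fact have $\Omega(n/\log^{2C+1}n)$ singleton nodal domains.

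\textbf{Main obstacle.} The delicate step is the construction of $H$: one must simultaneously (i) cut every high-degree vertex down to degree $d-1$, (ii) break all short cycles, and (iii) ensure the total quadratic-form loss from deleted edges is $\le o(\|f\|^2)$, which requires that the deleted edges either join opposite-sign vertices or touch vertices with negligible $|f|$-value — and the count of "significant" vertices must be reconciled with the assumed smallness of the singleton-domain count and the $\ell_\infty$ bound. Getting the bookkeeping to close with the stated exponent $2C+1$ (rather than $2C$ or $2C+2$) is the crux, and is presumably where the $\ell_2$-localization on a set of size exactly $\eps n$ matching the expansion threshold is used most sharply.
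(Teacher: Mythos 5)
Your high-level strategy matches the paper's: pass to a subgraph $H\subseteq G[S]$ of maximum degree $d-1$, girth $c\log_{d-1}n$, and hereditary degree $2+O(\delta)$ on which $f_S^\top A_H f_S$ is still $\approx\lambda\|f\|^2$, then contradict \pref{lem:her-deg-specrad}. But the construction of $H$ --- which you yourself flag as ``the crux'' --- is left unresolved, and the sketch you give of it does not work as stated. Two concrete problems. First, a sign error: you say the deleted edges $uv$ ``all satisfy $f(u)f(v)\le 0$ or involve vertices of tiny $|f|$-value, so $\sum_{\text{deleted}} f(u)f(v)$ is nonnegative up to error.'' Since $f_S^\top A_H f_S = f_S^\top A_{G[S]}f_S - 2\sum_{\text{deleted}}f(u)f(v)$, keeping the left side as negative as $\lambda$ requires the deleted edges to contribute a \emph{nonnegative} amount, i.e.\ $f(u)f(v)\ge 0$; deleting opposite-sign edges pushes the quadratic form in the wrong direction. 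Second, the accounting: cutting every degree-$d$ vertex of $G[S]$ down to degree $d-1$ requires deleting on the order of one edge per such vertex, potentially $\Omega(n)$ edges, and your proposed count of $O(n/\log^{2C}n)$ ``significant'' vertices does not follow from \ref{property:eigvec-deloc} (the $\ell_\infty$ bound caps individual entries at $\log^{C}n\,\|f\|/\sqrt n$ but allows up to $n$ vertices with $|f(v)|\ge\|f\|/\sqrt n$).

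The paper resolves both issues with one move: it deletes \emph{all} same-sign edges $L_+=\{\{u,v\}\in E(G[S]): f(u)f(v)\ge 0\}$. This costs nothing in the quadratic form, since $f_S^\top A_{L_+}f_S\ge 0$, and it automatically removes an incident edge at every full-degree vertex of $G[S]$ that is \emph{not} a singleton nodal domain (such a vertex has all $d$ neighbors in $S$ and hence at least one neighbor of the same sign). The only full-degree vertices left untouched are the singleton domains, and this is exactly where the hypothesis enters: there are fewer than $n/\log^{2C+1}n$ of them, so deleting one arbitrary incident edge at each costs at most $2\cdot\frac{n}{\log^{2C+1}n}\cdot\|f\|_\infty^2\le \frac{2}{\log n}\|f\|^2$ by \ref{property:eigvec-deloc} --- this is the step that produces the exponent $2C+1$. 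Your write-up never ties the number of costly deletions to the singleton-domain count, which is the whole point of the lemma; without that link the argument does not close.
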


A key lemma in service of proving \pref{lem:many-nodal-domains-if-localized} is:
\begin{lemma}   \label{lem:untempered-subgraph}
    Let $G,f$ and $S$ satisfy the above conditions, and let $c,d,\delta$ and $\eta$ be the parameters from above. If $f$ has fewer than $\frac{n}{\log^{2C+1}n}$ singleton nodal domains in $S$, then there is a subgraph $H$ of $G$ on vertex set $S$ such that:
    \begin{enumerate}
        \item \label{item:girth} The girth of $H$ is at least $c \log_{d-1} n$.
        \item \label{item:max-deg} The maximum degree of $H$ is at most $d-1$.
        \item \label{item:her-deg} The hereditary degree of $H$ is at most $2+\delta$.
        \item \label{item:quad-form} $f_S^{\top}A_H f_S\le (\lambda+4d\sqrt{\eta})\|f_S\|^2$.
    \end{enumerate}
\end{lemma}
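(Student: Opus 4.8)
The plan is to build $H$ from $G[S]$ by deleting two kinds of edges: first, all edges lying on short cycles (to fix the girth), and second, enough edges at each vertex to bring the maximum degree down to $d-1$, while being careful to delete edges that contribute little to the quadratic form $f_S^\top A_{G[S]} f_S$. Then I will argue that the hereditary degree constraint follows from the lossless edge expansion of $G$ together with the $\ell_2$-localization of $f$ on $S$, and that the quadratic form does not drop by much because the edges we removed either carry small weight or are incident to singleton nodal domains, of which there are few by hypothesis.

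First I would record that $f_S^\top A_{G[S]} f_S \ge \lambda\|f\|^2 - 2\sqrt\eta\,d\,\|f\|^2$ or thereabouts: indeed $f^\top A_G f = \lambda\|f\|^2$, and the edges leaving $S$ contribute at most $O(\sqrt\eta)\|f\|^2$ in absolute value since $\|f_{\bar S}\|^2 \le \eta\|f\|^2$ and each vertex has degree $d$ (use Cauchy--Schwarz on $\sum_{u\in S, v\notin S, u\sim v} f_u f_v$, bounding $\sum_{v\notin S} d\cdot f_v^2 \le d\eta\|f\|^2$ and $\sum_{u\in S} d f_u^2 \le d\|f\|^2$). So $f_S^\top A_{G[S]} f_S = \lambda\|f\|^2 \pm O(d\sqrt\eta)\|f\|^2$, and since $\|f_S\|^2 \ge (1-\eta)\|f\|^2$ we get $f_S^\top A_{G[S]} f_S \ge (\lambda - c_0 d\sqrt\eta)\|f_S\|^2$ for a small absolute constant $c_0$; I'll need to track constants so that the final loss is at most $4d\sqrt\eta$. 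Next, for the girth: by the ``almost-high girth'' property there is an edge set $F$ with $|F| \le O(n^{1-c})$ so that $G\setminus F$ has girth $\ge c\log_{d-1} n$; delete $F \cap E(G[S])$ from $G[S]$. The weight removed is $\sum_{\{u,v\}\in F} |f_u f_v| \le \|f\|_\infty^2 |F| \le (\log^{2C} n / n)\cdot O(n^{1-c}) \cdot \|f\|^2 = o(\|f\|^2)$, using $\ell_\infty$-delocalization; this is negligible.

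The degree reduction is the heart of the argument. For each vertex $u\in S$ of degree $\ge d$ in the current graph, I want to delete one incident edge; after deleting at most one edge per vertex the degree is at most $d-1$ (since $G$ is $d$-regular, $G[S]$ already has degree $\le d$, and the girth-fixing step only lowered degrees). The edge to delete at $u$ should be one minimizing $|f_u f_v|$ over neighbors $v$; but the subtlety is that deleting an edge lowers the degree at both endpoints, so I should think of this as choosing an orientation / a fractional matching. A clean way: if $u$ is \emph{not} a singleton nodal domain of $f$ in $G$, then among its neighbors in $G$ there are $\ge 1$ (in fact the argument of Proposition \ref{prop:warmup}-type reasoning is not needed here — I just need: since $u$ is not singleton, some neighbor $v$ has $f_u f_v \ge 0$, i.e. the edge $\{u,v\}$ contributes nonnegatively). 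Hmm — the cleanest route is: delete at each high-degree non-singleton vertex an incident edge on which $f_u f_v \ge 0$, so the quadratic form only \emph{increases} from that deletion; and the number of \emph{singleton} vertices in $S$ is $< n/\log^{2C+1} n$ by hypothesis, so the total weight removed at singletons is $< (n/\log^{2C+1}n)\cdot\|f\|_\infty^2 \le \|f\|^2/\log n = o(\|f\|^2)$. One must handle the double-counting of edges between two high-degree vertices (an edge can be ``chosen for deletion'' from both ends), but this only helps — fewer deletions — so the bound on removed weight still holds. Combining, $f_S^\top A_H f_S \ge f_S^\top A_{G[S]} f_S - o(\|f\|^2) \ge (\lambda - c_0 d\sqrt\eta - o(1))\|f_S\|^2 \ge (\lambda + 4d\sqrt\eta)\|f_S\|^2$ provided the constants are arranged (note the sign: we want a \emph{lower} bound on $f_S^\top A_H f_S$ of the stated form, and $\lambda + 4d\sqrt\eta > \lambda$, so we need the loss from $\lambda\|f_S\|^2$ to be controlled — wait, we need $f_S^\top A_H f_S \le (\lambda+4d\sqrt\eta)\|f_S\|^2$, an \emph{upper} bound). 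Let me restate: $f_S^\top A_H f_S \le f^\top A_G f - (\text{edges leaving }S) + o(\|f\|^2)$, and $f^\top A_G f = \lambda\|f\|^2 \le \lambda\|f_S\|^2$ since $\lambda<0$ and $\|f_S\|\le\|f\|$; the boundary term is $O(d\sqrt\eta)\|f\|^2$ in absolute value; so $f_S^\top A_H f_S \le \lambda\|f_S\|^2 + O(d\sqrt\eta)\|f\|^2 + o(\|f\|^2) \le (\lambda + 4d\sqrt\eta)\|f_S\|^2$ using $\|f\|^2 \le \|f_S\|^2/(1-\eta)$ and absorbing constants (here the deletion steps I must argue do not \emph{increase} $f_S^\top A_H f_S$ by more than $o(\|f\|^2)$ — deleting nonnegative-weight edges can only decrease it, deleting the $o(1)$-mass edges at singletons and on short cycles changes it by $o(\|f\|^2)$).

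Finally, the hereditary degree: for any $H'\subseteq H$ with vertex set $T$, the number of edges is at most $\frac12(d-1)|T|$ trivially, but that gives average degree $d-1$, not $2+\delta$ — this is where the \emph{lossless edge expansion} $\Psi_\eps(G)\ge d-2-\delta$ enters. Since $|T|\le|S|=\eps n$, expansion gives $|E(T,\bar T)| \ge (d-2-\delta)|T|$ in $G$; as $G$ is $d$-regular, $\sum_{v\in T}\deg_G(v) = d|T| = 2|E(G[T])| + |E(T,\bar T)|$, so $2|E(G[T])| \le d|T| - (d-2-\delta)|T| = (2+\delta)|T|$, i.e. $\AvgDeg(G[T])\le 2+\delta$; since $H'\subseteq H\subseteq G$ (on vertices $\subseteq S$), $|E(H')| \le |E(G[T])| \le \frac{2+\delta}{2}|T|$, giving hereditary degree $\le 2+\delta$. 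I expect the main obstacle to be the bookkeeping in the degree-reduction step — making precise the claim that one can delete at most one edge per high-degree vertex so that the result has max degree $\le d-1$ \emph{and} every deleted edge is either nonnegatively weighted (so removal only helps the upper bound on the quadratic form) or incident to a singleton/short-cycle edge (so its total mass is $o(\|f\|^2)$), while correctly accounting for edges whose both endpoints have degree $d$.
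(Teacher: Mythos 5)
Your proposal is correct and follows essentially the same route as the paper: delete the short-cycle edges $F\cap E(G[S])$, delete same-sign edges to fix the maximum degree, charge the arbitrary deletions at singleton vertices to the $\ell_\infty$ bound and the hypothesis on the number of singletons, and get the hereditary degree from lossless edge expansion exactly as you describe. The only difference is that the paper deletes \emph{all} edges $\{u,v\}$ of $G[S]$ with $f_S(u)f_S(v)\ge 0$ (not just one per high-degree vertex), which removes the one-edge-per-vertex bookkeeping you flag as the main obstacle while still only decreasing the quadratic form.
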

\begin{proof}
    Let $H$ be the graph obtained by starting with $G[S]$, and then deleting the edge subgraph $$L \coloneqq L_+ \cup L_{\circ} \cup (F\cap E(G[S]))$$ where $L_+$ is the subgraph obtained by choosing every edge $\{u,v\}$ in $G[S]$ such that $f_S(u)f_S(v)\geq0$, and $L_{\circ}$ is obtained by choosing one arbitrary incident edge in $G[S]$ to each singleton nodal domain  $v\in S$ with degree $d$ in $G[S]$.
    \paragraph{Proof of \pref{item:girth}.} $H$ is a subgraph of $G\setminus F$ and hence has girth at least $c\log_{d-1}n$. 
    \paragraph{Proof of \pref{item:max-deg}.}  Every vertex $v$ with degree $d$ in $G[S]$ has an incident edge in $L$: indeed, if $v$ is a singleton nodal domain with degree $d$ in $G[S]$ then one of its incident edges is added to $L_\circ$; otherwise $v$ has a neighbor $u\in S$ such that $f_S(u)f_S(v)\geq0$, which means $\{u,v\}\in L_+$.  Consequently, every vertex in $H$ has degree bounded by $d-1$.
    \paragraph{Proof of \pref{item:her-deg}.}  For any $T\subseteq S$, since $|T|\le \eps n$, it must be the case that $|E(T,\ol{T})|\ge d-2-\delta$ by \ref{property:lossless-edge-expansion}.  Since $G$ is a $d$-regular graph, the average degree of $G[T]$ must be at most $2+\delta$.  Consequently since $H[T]$ is a subgraph of $G[T]$, the average degree of $H[T]$ is also bounded by $2+\delta$.
    \paragraph{Proof of \pref{item:quad-form}.}  First observe that:
    \begin{align*}
        \lambda\|f\|^2 &= f^{\top} A_G f \\
        &= f_S^{\top} A_G f_S + 2f_{\ol{S}}^{\top} A_G f_S + f_{\ol{S}}^{\top} A_G f_{\ol{S}} \\
        &\ge f_S^{\top} A_{G[S]} f_S - 2d\sqrt{\eta}\|f\|^2 - d\eta\|f\|^2 \\
        &\ge f_S^{\top} A_{G[S]} f_S - 3d\sqrt{\eta}\|f\|^2
    \end{align*}
    where the third line follows from \ref{property:localization}, Cauchy-Schwarz inequality, and $\lambda_{\max}(A_G)\le d$.
    Consequently $f_S^{\top}A_{G[S]}f_S \le (\lambda+3d\sqrt{\eta})\|f\|^2$.  Next, observe that:
    \begin{align*}
        f_S^{\top} A_{G[S]}f_S &= f_S^{\top} A_H f_S + f_S^{\top} A_L f_S \\
        &= f_S^{\top} A_H f_S + f_S^{\top} A_{L_{+}} f_S + f_S^{\top} A_{L_{\circ}} f_S + f_S^{\top} A_{F\cap E(G[S])} f_S \\
        &\ge f_S^{\top} A_H f_S + f_S^{\top} A_{L_{\circ}} f_S + f_S^{\top} A_{F\cap E(G[S])} f_S &(\text{since $f_S^{\top}A_{L_+}f_S\ge 0$}) \\
        &\ge f_S^{\top} A_H f_S - 2|L_{\circ}|\cdot\|f_S\|_{\infty}^2 - 2|F\cap E(G[S])|\cdot\|f_S\|_{\infty}^2 \\
        &\ge f_S^{\top} A_H f_S - \left(\frac{2}{\log n} + O(n^{-c}\log^{2C} n)\right)\|f\|^2,
    \end{align*}
    where the last inequality is because $|L_{\circ}|\le \frac{n}{\log^{2C+1} n}$ by assumption, $\|f_S\|_{\infty}^2 \le \frac{\log^{2C}n}{n}\|f\|^2$ by \ref{property:eigvec-deloc}, and $|F\cap E(G[S])| = O\left(n^{1-c}\right)$ by \ref{property:small-excess}.
    
    Chaining the above two inequalities together gives us:
    \[
        f_S^{\top} A_H f_S \le \left(\lambda + 3d\sqrt{\eta} + O\left(\frac{1}{\log n}\right)\right)\|f\|^2\le (\lambda + 4d\sqrt{\eta})\|f\|^2.
    \]
    Since $\lambda+4d\sqrt{\eta} < 0$ and $\|f\|^2\ge\|f_S\|^2$, the above is bounded by $(\lambda + 4d\sqrt{\eta})\|f_S\|^2$, completing the proof of \pref{item:quad-form}.
\end{proof}

We are now ready to prove \pref{lem:many-nodal-domains-if-localized}.
\begin{proof}[Proof of \pref{lem:many-nodal-domains-if-localized}]
    We prove the desired statement by contradiction.  If $f$ has less than $\frac{n}{\log^{2C+1} n}$ singleton nodal domains then consider the subgraph $H$ that is promised by \pref{lem:untempered-subgraph}.  On one hand by \pref{item:quad-form} of \pref{lem:untempered-subgraph}:
    \[
        f_S^{\top} A_H f_S \le (\lambda + 4d\sqrt{\eta})\|f_S\|^2 \le (-2(1+2\delta)\sqrt{d-2} + \delta\sqrt{d-2})\|f_S\|^2 = -2\left(1+\frac{3}{2}\delta\right)\sqrt{d-2}\|f_S\|^2.
    \]
    which implies that the spectral radius of $A_H$ is lower bounded by $2\left(1+\frac{3}{2}\delta\right)\sqrt{d-2}$.  On the other hand, by \pref{item:girth}, \pref{item:max-deg} and \pref{item:her-deg} of \pref{lem:untempered-subgraph} in conjunction with \pref{lem:her-deg-specrad} the spectral radius of $A_H$ is upper bounded by $\frac{(2+\delta)\sqrt{d-2}}{1-\frac{1}{c\log_{d-1}n}}$, which is at most $2(1+\delta)\sqrt{d-2}$, which is a contradiction.
\end{proof}

\begin{remark}[Sharpness of \pref{lem:untempered-subgraph} ]We remark that \pref{item:max-deg} in \pref{lem:untempered-subgraph} is the source of the $\lambda\le -2\sqrt{d-2}-\alpha$ hypothesis in \pref{thm:main}; reducing the degree of $H$ below $d-1$  would yield a larger spectral window in \pref{thm:main}. The entirely local argument of the Lemma is seen to be sharp by taking $G[S]=\cup_{i=1}^{k}T_i$ to be a disjoint union of finite $d-$ary trees $T_i$ of depth $O(\log\log n)$ such that the graph distance between any two trees is at least $2$, and $f$ to be an eigenfunction of a $(d-1)$-ary tree $T'_i\subset T_i$ with eigenvalue $\lambda\approx -2\sqrt{d-2}$ in each copy, and zero everywhere else. Then $f$ and $G$ satisfy the hypotheses of \pref{lem:untempered-subgraph} locally, $f$ has no singleton nodal domains in $G[S]$ (since each vertex has a path on which $f=0$ to the leaves of the tree), and there is no subgraph of $G[S]$ of maximum degree strictly less than $d-1$ satisfying \pref{item:quad-form}. Thus, improving \pref{lem:untempered-subgraph} will require either additional hypotheses or a more global examination of the structure of $G$ and $f$.
\end{remark}

\section{Many nodal domains in random regular graphs}\label{sec:many}
We are now ready to prove our main result.

\begin{proof}[Proof of \pref{thm:main}]
    By \pref{lem:many-isolated-or-localized}, with probability $1-o(1)$, every eigenvector $f$ either has $\Omega(n)$ singleton nodal domains or satisfies \ref{property:localization}.

    We define the following events:
    \begin{itemize}
        \item $\calE_1$: $G$ satisfies \ref{property:small-excess} with constant $\BikeConst$ and \ref{property:lossless-edge-expansion}; $f$ satisfies \ref{property:eigvec-deloc} with constant $\HYConst$ and \ref{property:neg-qform},
        \item $\calE_2$: $f$ has at least $\GWConst n/2$ singleton nodal domains,
        \item $\calE_3$: $f$ satisfies \ref{property:localization} and has fewer than $\GWConst n/2$ singleton nodal domains.
    \end{itemize}
    Clearly, when $\calE_2$ occurs there are ${\Omega}(n/\log^{2C+1} n)$ nodal domains.  Next, observe that when both $\calE_1$ and $\calE_3$ occur, the conditions of \pref{lem:many-nodal-domains-if-localized} are satisfied and, $f$ has ${\Omega}(n/\log^{2C+1}n)$ singleton nodal domains.

    Thus, it suffices to lower bound $\Pr[\calE_2\cup (\calE_1\cap\calE_3)]$.  Since $\calE_2$ and $\calE_3$ are mutually exclusive, $\calE_2$ and $\calE_3\cap\calE_1$ are also mutually exclusive, and hence:
    \begin{align}
        \Pr[\calE_2\cup (\calE_1\cap\calE_3)] = \Pr[\calE_2] + \Pr[\calE_1\cap\calE_3] \ge \Pr[\calE_2]+\Pr[\calE_3] - \Pr\left[\ol{\calE}_1\right]     \label{eq:many-nodal-domains-pr}
    \end{align}

    \pref{lem:many-isolated-or-localized} implies that $\Pr[\calE_2]+\Pr[\calE_3] = 1-o(1)$.  We further have $\Pr\left[\ol{\calE}_1\right]=o(1)$ by a combination of \pref{lem:excess-to-girth}, \pref{lem:lossless-edge-expansion} and \pref{thm:eigvec-deloc}.  Thus,
    \[
        \Pr[\calE_2\cup (\calE_1\cap\calE_3)] = 1-o(1),
    \]
    which completes the proof.
    
\end{proof}

\section{Large Nodal Domains in Expanders}\label{sec:xml}
In this section we prove that as a consequence of expansion in random graphs, for any eigenvector of a random $d$-regular graph, most vertices are part of a macroscopic nodal domain.  Key to our result in this section is the following lemma, which proves that by the expander mixing lemma, the only way to have the ``correct'' number of internal edges in a large subgraph is to have a large connected component.

\begin{lemma}   \label{lem:giant-component}
    Let $G$ be a $n$-vertex $d$-regular graph and let $S\subseteq V(G)$ of size $cn$, where $c$ is arbitrary. Also assume $\lambda(G)<d$.  Then $G[S]$ has a connected component of size at least:
    \[
        \left(c - \frac{2(1-c)\lambda(G)}{d-\lambda(G)}\right)\cdot n.
    \]
\end{lemma}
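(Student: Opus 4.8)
The plan is to run the standard expander-mixing argument on the induced subgraph $G[S]$. Suppose for contradiction that every connected component of $G[S]$ has size less than $mn$ for the claimed threshold $m := c - \frac{2(1-c)\lambda(G)}{d-\lambda(G)}$. Partition the components of $G[S]$ greedily into two groups $S = S_1 \sqcup S_2$ so that $|S_1|, |S_2|$ are as balanced as possible; since each component has size less than $mn$, we can in fact do better and arrange that both $|S_1|$ and $|S_2|$ are at most, say, $\tfrac{1}{2}|S| + \tfrac{1}{2}mn$ — but the key point I need is simply that $|S_1|, |S_2| \le |S| - \varepsilon$ for a suitable gap, and more importantly that there are \emph{no edges} between $S_1$ and $S_2$ in $G$, since they are unions of distinct connected components of $G[S]$. (If $G[S]$ is connected this is already a contradiction with $|S|$ large unless the threshold is vacuous, so assume it is disconnected.)

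Next I would apply the Expander Mixing Lemma (\pref{lem:exp-mix-lem}) to the pair $(S_1, S_2)$. Since $e(S_1, S_2) = 0$, the lemma gives
\[
\frac{d}{n}|S_1|\,|S_2| \le \lambda(G)\sqrt{|S_1|\,|S_2|\left(1-\frac{|S_1|}{n}\right)\left(1-\frac{|S_2|}{n}\right)} \le \lambda(G)\sqrt{|S_1|\,|S_2|}\cdot\left(1 - \frac{c}{n}\cdot\text{(something)}\right),
\]
and after dividing by $\sqrt{|S_1||S_2|}$ and using $|S_1| + |S_2| = |S| = cn$ together with $|S_1|,|S_2| \ge$ (the leftover side, bounded below using the component-size cap), I get an inequality that forces $\min(|S_1|,|S_2|)$ to be small — concretely $\sqrt{|S_1||S_2|} \le \frac{n\lambda(G)}{d}\sqrt{(1-|S_1|/n)(1-|S_2|/n)} \le \frac{n\lambda(G)}{d}$, hence one of the two parts has size at most roughly $\frac{\lambda(G)^2}{d^2}\cdot\frac{n}{\text{other part}}$. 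Iterating, the total mass $cn$ must concentrate in one part up to an additive error controlled by $\lambda(G)/(d-\lambda(G))$; rearranging yields that the largest part — and hence, after peeling, some single component — has size at least $\left(c - \frac{2(1-c)\lambda(G)}{d-\lambda(G)}\right)n$, contradicting the assumed cap. The bound $\lambda(G) < d$ is exactly what makes $d - \lambda(G) > 0$ so the threshold is meaningful.

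The main obstacle is bookkeeping the two-sided split cleanly: the Expander Mixing Lemma naturally bounds $e(S_1,S_2)$ for a \emph{fixed} bipartition, but I want a statement about the largest component, so I need to argue that if \emph{all} components are small then I can choose a bipartition with $|S_1|, |S_2|$ both bounded away from $0$ and $cn$, and then the "no crossing edges" fact combined with EML forces the product $|S_1||S_2|$ to be small, which is incompatible with a balanced split. The cleanest route is probably: let $K$ be the largest component, set $S_1 = K$, $S_2 = S \setminus K$, apply EML to get $\frac{d}{n}|K|\,|S_2| \le \lambda(G)\sqrt{|K||S_2|}$, so $|K|\,|S_2| \le \frac{\lambda(G)^2 n^2}{d^2}$... — actually the right manipulation is $\frac{d}{n}|K| \le \lambda(G)\sqrt{\frac{|K|}{|S_2|}\left(1-\frac{|K|}{n}\right)\left(1-\frac{|S_2|}{n}\right)}$, leading after using $|S_2| = cn - |K|$ and $1 - |S_2|/n \le 1$ to a lower bound on $|K|$ of the stated form; I should double-check that the factor of $2$ and the $(1-c)$ in the numerator come out correctly from bounding $1 - |K|/n \le 1 - (\text{threshold})$ versus simply $\le 1$. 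I expect the $(1-c)$ factor arises from retaining $(1 - |S_1|/n)$ rather than dropping it, which sharpens the bound precisely when $c$ is close to $1$.
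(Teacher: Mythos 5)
Your overall strategy (unions of components of $G[S]$ have no crossing edges in $G$, then apply the expander mixing lemma) is workable, but as written there are two genuine gaps. First, the route you call ``cleanest'' --- taking $S_1=K$ to be the largest component and $S_2=S\setminus K$ --- does not work: the inequality it produces, $|K|\,(cn-|K|)\le \lambda(G)^2n^2/d^2$, is a quadratic condition satisfied both when $|K|$ is close to $cn$ \emph{and} when $|K|$ is small (if every component had size $O(1)$, the product $|K|\cdot|S_2|$ would be tiny and the inequality would hold vacuously), so it gives no lower bound on $|K|$. Second, in the balanced-bipartition route you drop the factor $\sqrt{(1-|S_1|/n)(1-|S_2|/n)}$ and keep only $\sqrt{|S_1||S_2|}\le n\lambda(G)/d$; this loses too much to reach the stated constant. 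For instance, with $c=0.9$ and $\lambda(G)=d/2$ the claimed threshold is $m=0.7$; a split balanced to within the largest component size only guarantees $|S_1||S_2|\ge \tfrac{c^2-m^2}{4}n^2=0.08\,n^2$, while your weakened inequality merely requires $|S_1||S_2|\le 0.25\,n^2$ --- no contradiction. The argument can be completed, but only by (i) using the greedy split so that $\big||S_1|-|S_2|\big|$ is at most the maximum component size, and (ii) retaining the full mixing-lemma bound, which after squaring gives $|S_1||S_2|\le \frac{\lambda(G)^2(1-c)}{d^2-\lambda(G)^2}n^2$; one then checks that this contradicts $|S_1||S_2|\ge\tfrac{c^2-m^2}{4}n^2$ exactly when $m>0$, i.e., exactly when the lemma is non-vacuous. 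You flagged this constant-chasing as something to ``double-check,'' but it is precisely where the proof lives, and with the simplification you actually wrote down it fails.

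For comparison, the paper avoids all of this with a short average-degree argument: the expander mixing lemma lower-bounds $\AvgDeg(G[S])\ge cd-\lambda(G)(1-c)$; since every edge of $G[S]$ lies inside a single component, some component $C^*$ of size $c'n$ has average degree at least this; the mixing lemma applied to $(C^*,C^*)$ upper-bounds its average degree by $c'd+\lambda(G)(1-c')$; solving for $c'$ gives the stated bound directly, with no contradiction, no bipartition, and no peeling. You may want to compare the two to see how the $(1-c)$ and the factor of $2$ emerge cleanly there.
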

\begin{proof}
    By the expander mixing lemma (\pref{lem:exp-mix-lem}), we know that the average degree of $G[S]$ is:
    \begin{align*}
        \AvgDeg(G[S]) &= \frac{|E(S,S)|}{|S|} \\
        &\ge cd - \lambda(G)(1-c).
    \end{align*}
    Let the size of the connected component $C^*$ in $G[S]$ with maximum average degree be $c'n$.  We know that $\AvgDeg(G[C^*])$ is at least $\AvgDeg(G[S])$, and by the expander mixing lemma (\pref{lem:exp-mix-lem}):
    \begin{align*}
        \AvgDeg(G[C^*]) &= \frac{e(C^*, C^*)}{|C^*|} \\
        &\le c'd + \lambda(G)(1-c').
    \end{align*}
    Consequently, we have:
    \begin{align*}
        c'd + \lambda(G)(1-c') &\ge cd - \lambda(G)(1-c) \\
        c'(d-\lambda(G)) &\ge c(d+\lambda(G)) - 2\lambda(G)\\
        c' &\ge c\cdot\frac{d+\lambda(G)}{d-\lambda(G)} - \frac{2\lambda(G)}{d-\lambda(G)} \\
        &= c - \frac{2(1-c)\lambda(G)}{d-\lambda(G)}.
    \end{align*}
    which proves the claim.
\end{proof}

The result about nodal domains (which actually really applies to any signing of the vertices independent of being an eigenvector) in expanders is:
\begin{theorem}\label{thm:xml}
    Let $G$ be a $d$-regular graph and let $f$ be any eigenvector of $A_G$.  Suppose $C_1$ and $C_2$ be the two largest nodal domains in $f$, then $|C_1|+|C_2|\ge \left(1-\frac{2\lambda(G)}{d-\lambda(G)}\right)n$.
\end{theorem}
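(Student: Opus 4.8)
The plan is to reduce the statement to a direct application of \pref{lem:giant-component} together with a counting argument that pairs up the weak nodal domains into two groups by sign. First, partition $V(G)$ into the set $P=\{v: f(v)\ge 0\}$ and its complement $N=\{v: f(v)<0\}$; without loss of generality assume $|P|\ge n/2$, so $|P| = cn$ for some $c\ge 1/2$. The connected components of $G[P]$ are exactly the weak nodal domains of $f$ on which $f$ is nonnegative (note that a single vertex with $f(v)=0$ could lie in both a ``nonnegative'' and a ``nonpositive'' maximal connected subgraph, but for the purpose of a lower bound it suffices to work with the honest partition $P\sqcup N$ and observe that each component of $G[P]$ is contained in some weak nodal domain). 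By \pref{lem:giant-component} applied with the set $S=P$ of size $cn$, the graph $G[P]$ has a connected component $C_1$ of size at least $\left(c - \frac{2(1-c)\lambda(G)}{d-\lambda(G)}\right)n$, and this component sits inside one of the weak nodal domains, so the largest nodal domain has at least this many vertices.

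Next I would run the same argument on $N$: here $|N| = (1-c)n$, and \pref{lem:giant-component} gives a connected component $C_2$ of $G[N]$ — hence a second nodal domain, necessarily distinct from the first since it lies in $N$ rather than $P$ — of size at least $\left((1-c) - \frac{2c\,\lambda(G)}{d-\lambda(G)}\right)n$. Adding the two bounds, the constant terms $c$ and $1-c$ sum to $1$, and the two error terms combine as
\[
\frac{2(1-c)\lambda(G)}{d-\lambda(G)} + \frac{2c\,\lambda(G)}{d-\lambda(G)} = \frac{2\lambda(G)}{d-\lambda(G)},
\]
yielding $|C_1| + |C_2| \ge \left(1 - \frac{2\lambda(G)}{d-\lambda(G)}\right)n$, which is exactly the claimed bound (and it is only meaningful when $\lambda(G) < d/3$, so the hypothesis $\lambda(G)<d$ in \pref{lem:giant-component} is comfortably satisfied and there is nothing further to check there). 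One should also note that since $f$ is a non-leading eigenvector it has both a strictly positive and a strictly negative entry, so $P$ and $N$ are both nonempty and $C_1 \ne C_2$; actually even this is not strictly needed, since if one of $P,N$ were empty the bound would be trivially dominated by the single giant domain.

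The only real content is \pref{lem:giant-component} itself, which has already been proved above via the expander mixing lemma; the step here is purely bookkeeping. The one subtlety worth spelling out carefully — and the place a careless write-up could go wrong — is the interface between the sign partition $P\sqcup N$ and the definition of \emph{weak} nodal domain, since vertices with $f(v)=0$ may be shared between a nonnegative and a nonpositive maximal connected subgraph; but because we only need a lower bound and each component of $G[P]$ (resp.\ $G[N]$) is contained in \emph{some} weak nodal domain, assigning the ties to $P$ throughout and never double-counting a vertex between $C_1$ and $C_2$ resolves this cleanly. I expect no genuine obstacle beyond stating this reduction precisely.
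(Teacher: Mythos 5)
Your proposal is correct and follows essentially the same route as the paper: partition the vertices by sign into $S_+=\{v:f(v)\ge 0\}$ and $S_-=\{v:f(v)<0\}$, apply \pref{lem:giant-component} to each part, and add the two bounds so the error terms telescope to $\frac{2\lambda(G)}{d-\lambda(G)}$. Your extra care about vertices with $f(v)=0$ lying in two weak nodal domains is a reasonable refinement the paper leaves implicit, but it does not change the argument.
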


\begin{proof}
    Let $S_+\coloneqq \{v\in V(G): f(v)\ge 0\}$ and $S_- \coloneqq \{v\in V(G): f(v) < 0\}$.  Let's denote $|S_+|$ as $cn$ and $|S_-|$ as $(1-c)n$.  By \pref{lem:giant-component} we know that the largest component $C_+$ in $S_+$ has size at least $\left(c-\frac{2(1-c)\lambda(G)}{d-\lambda(G)}\right)\cdot n$ and the largest component $C_-$ in $S_-$ (which is distinct from $C_+$) has size at least $\left(1-c-\frac{2c\lambda(G)}{d-\lambda(G)}\right)\cdot n$.  It then follows:
    \begin{align*}
        |C_1|+|C_2| &\ge |C_+| + |C_-| \\
        &\ge \left(1-\frac{2\lambda(G)}{d-\lambda(G)}\right)\cdot n.
    \end{align*}
\end{proof}

\begin{remark}
    When $G$ is a random $d$-regular graph, then $\frac{2\lambda(G)}{d-\lambda(G)}=O\left(\frac{1}{\sqrt{d}}\right)$, and so for large enough $d$, the statement implies that a large constant fraction of the vertices are part of the two largest nodal domains.  For instance, when $d\ge 99$, at least half the vertices are part of the two largest nodal domains.
\end{remark}

\end{document}